\documentclass[12pt,oneside]{amsart}

\usepackage[margin=1in]{geometry}

\usepackage{amsmath,amsthm,amscd,amssymb}

\usepackage{graphicx}
\usepackage{xcolor}
\usepackage{tikz}
\usepackage{pgfplots}
\usetikzlibrary{math,calc}
\pgfplotsset{compat=1.17}

\usepackage[
  colorlinks=true,
  linkcolor=red,
  citecolor=red,
  urlcolor=red
]{hyperref}

\numberwithin{equation}{section}

\theoremstyle{plain}
\newtheorem{theorem}{Theorem}[section]

\newtheorem{corollary}[theorem]{Corollary}
\newtheorem{proposition}[theorem]{Proposition}

\theoremstyle{definition}
\newtheorem{definition}[theorem]{Definition}

\theoremstyle{remark}
\newtheorem{remark}[theorem]{Remark}

\DeclareMathOperator{\supp}{supp}
\DeclareMathOperator{\FR}{FR}
\DeclareMathOperator{\spec}{spec}
\DeclareMathOperator*{\E}{{\mathbb E}}
\DeclareMathOperator{\tr}{tr}

\date{}

\author{A. Iosevich}
\address{Department of Mathematics, University of Rochester, Rochester, NY}
\email{iosevich@gmail.com}

\author{A. Mayeli}
\address{Department of PhD Program Mathematics, City University of New York, NY}
\email{amayeli@gc.cuny.edu}

\author{E. Wyman}
\address{Department of Mathematics, University of Binghamton, Binghamton, NY}
\email{emmett.Wyman@gmail.com}

\thanks{A.I. was supported in part by the National Science Foundation grant DMS-2506858. A.M. was supported in part by the National Science Foundation grant DMS-2453769, the AMS-Simons Research Enhancement Grant, and the PSC-CUNY research grants. E.W. was supported in part by the National Science Foundation under grant no. DMS-2422900.}

\begin{document}

\title[Spectral synthesis on Riemannian manifolds]{Spectral synthesis on Riemannian manifolds}

\begin{abstract}
We study spectral synthesis for measures supported on thin subsets of compact Riemannian manifolds. We prove that under natural non-concentration conditions, such measures admit quantitative spectral synthesis, with explicit stability bounds. 

We show that this phenomenon depends strongly on the underlying geometry. On the torus, synthesis holds under broad assumptions, while on the sphere we establish rigidity results demonstrating that synthesis can fail in a sharp sense. 

As consequences, we obtain quantitative approximation results and uncertainty principles for functions with thin spectral support. These results provide a unified framework connecting spectral synthesis, geometric structure, and stability on compact manifolds.
\end{abstract}

\maketitle

\section{Introduction}

Spectral synthesis is a central problem in harmonic analysis, concerned with the reconstruction of functions or measures from partial Fourier data. While the classical theory is well understood in Euclidean settings, much less is known on compact manifolds, where curvature and global geometry play a decisive role.

In this paper, we establish quantitative spectral synthesis results for measures supported on thin subsets of compact Riemannian manifolds. Our main theorem shows that under natural non-concentration assumptions, such measures admit stable reconstruction from their Fourier data. We further demonstrate that the validity of synthesis depends sharply on the underlying geometry: on the torus the phenomenon is robust, while on the sphere we prove rigidity results that exhibit a fundamental obstruction.

\vskip.125in

A celebrated result of Agranovsky and Narayanan \cite{AN04} asserts that if $f \in L^p(\mathbb R^d)$, $d \ge 2$, and $\widehat{f}$ is carried by a $C^1$ submanifold $S \subset \mathbb R^d$ of dimension $k$, $1 \le k < d$, then $f$ is the zero function provided that $1 \le p \le \frac{2d}{k}$. Analogous results were previously established by Agmon, H\"ormander, Thangavelu, and others; see, for example, \cite{AH76}, \cite{H83}, and \cite{T94}.

\vskip.125in

The purpose of this paper is to establish a compact-manifold analogue of the Agranovsky--Narayanan theorem and to highlight a range of phenomena that have no Euclidean counterpart. Let $(M,g)$ be a compact $d$-dimensional Riemannian manifold without boundary. Let $\Delta_g$ be the Laplace--Beltrami operator, and for each spectral parameter $\lambda$ let $E_\lambda$ denote the orthogonal projection onto the $-\lambda^2$ eigenspace of $\Delta_g$. When $u$ is a distribution on $M$ and
$\varphi\in C^\infty(M)$, we interpret $\int_M u\,\overline{\varphi}$ as the distributional
pairing $\langle u,\varphi\rangle$. When $u$ is a distribution on $M$, we define
\[
    E_\lambda u = \sum_{\lambda_j = \lambda} \left(\int_M u \overline{e_j}\right) e_j.
\]

Our first result is a rigidity statement for measures supported on thin sets.

\begin{theorem}\label{main}
Let $M$ be a compact Riemannian manifold without boundary, and let $u$ be a Radon measure carried by a $C^1$ submanifold of $M$ of dimension $k$, $1 \le k < d$. If
\[
    \sum_\lambda \|E_\lambda u\|_{L^2(M)}^p < \infty
\]
for some $p$ with $2 \le p \le \frac{2d}{k}$, then $u$ is identically $0$.
\end{theorem}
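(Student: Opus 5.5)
The plan is a duality argument. I test $u$ against smooth functions concentrated in an $\varepsilon$-tube around the submanifold $S$, and use the summability hypothesis to show that the pairing tends to $0$, although by construction it is a fixed nonzero number.

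\textbf{Construction of the test functions.} Suppose $u\neq 0$. Choose $\psi\in C^\infty(M)$ with $\langle u,\psi\rangle\neq 0$. Since $S$ is only $C^1$, I will not use $\operatorname{dist}(x,S)$ directly. Working in finitely many coordinate charts, let $\chi_\varepsilon=\psi\cdot(\mathbf 1_{T_{2\varepsilon}}*\rho_\varepsilon)$, where $T_{2\varepsilon}$ is the $2\varepsilon$-neighbourhood of $S$ and $\rho_\varepsilon$ is a mollifier at scale $\varepsilon$. These functions have the following properties:
\begin{itemize}
\item $\chi_\varepsilon\equiv\psi$ on $T_\varepsilon\supset S$, so $\langle u,\chi_\varepsilon\rangle=\langle u,\psi\rangle$ for every $\varepsilon$, because $u$ is carried by $S$.
\item $\operatorname{supp}\chi_\varepsilon\subset T_{3\varepsilon}$ has volume $\lesssim\varepsilon^{d-k}$, since $S$ is a compact $C^1$ submanifold. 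Hence $\|\chi_\varepsilon\|_{L^2}\lesssim\varepsilon^{(d-k)/2}$.
\item $\|\Delta_g^N\chi_\varepsilon\|_{L^2}\lesssim_N\varepsilon^{-2N}\varepsilon^{(d-k)/2}$.
\end{itemize}

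\textbf{Spectral decomposition of the pairing.} Because $\chi_\varepsilon$ is smooth, its eigenfunction expansion converges rapidly, so $\langle u,\chi_\varepsilon\rangle=\sum_\lambda\langle E_\lambda u,E_\lambda\chi_\varepsilon\rangle$. Let $A=\big(\sum_\lambda\|E_\lambda u\|_2^p\big)^{1/p}$. Fix $R_0$ and split the sum into $\lambda\le R_0$ and $\lambda>R_0$.

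\emph{Low frequencies.} There are finitely many $\lambda\le R_0$, and each term is at most $\|E_\lambda u\|_2\,\|\chi_\varepsilon\|_2\to 0$ as $\varepsilon\to 0$.

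\emph{High frequencies.} By H\"older,
\[
\sum_{\lambda>R_0}|\langle E_\lambda u,E_\lambda\chi_\varepsilon\rangle|\le \Big(\sum_{\lambda>R_0}\|E_\lambda u\|_2^p\Big)^{1/p}\Big(\sum_\lambda\|E_\lambda\chi_\varepsilon\|_2^{p'}\Big)^{1/p'}.
\]
The first factor tends to $0$ as $R_0\to\infty$ by the hypothesis. It therefore suffices to prove
\[
B_\varepsilon:=\Big(\sum_\lambda\|E_\lambda\chi_\varepsilon\|_2^{p'}\Big)^{1/p'}\le C
\]
uniformly in $\varepsilon$. Granting this, I first choose $R_0$ large and then let $\varepsilon\to0$. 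This gives $\langle u,\psi\rangle=0$, a contradiction.

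\textbf{The uniform bound on $B_\varepsilon$.} This is the main step. I decompose the spectrum dyadically into $I_j=[2^j,2^{j+1})$ (together with $[0,1)$).
\begin{itemize}
\item By Weyl's law, the number of distinct eigenvalues in $I_j$ is $\lesssim 2^{jd}$.
\item Since $p'\le 2$, H\"older on each block gives
\[
\sum_{\lambda\in I_j}\|E_\lambda\chi_\varepsilon\|_2^{p'}\lesssim 2^{jd(1-p'/2)}\Big(\sum_{\lambda\in I_j}\|E_\lambda\chi_\varepsilon\|_2^2\Big)^{p'/2}.
\]
\item The $L^2$ mass of $\chi_\varepsilon$ on $I_j$ is $\lesssim\varepsilon^{(d-k)/2}\min\big(1,(2^j\varepsilon)^{-2N}\big)$. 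This uses $\|\chi_\varepsilon\|_2$ for $2^j\le\varepsilon^{-1}$ and $\|\Delta_g^N\chi_\varepsilon\|_2$ for $2^j>\varepsilon^{-1}$.
\item Taking the $1/p'$ power, block $j$ contributes
\[
\lesssim 2^{jd(1/p'-1/2)}\varepsilon^{(d-k)/2}\min\big(1,(2^j\varepsilon)^{-2N}\big).
\]
\end{itemize}
Summing over $j$, the blocks with $2^j\le\varepsilon^{-1}$ form a geometric sum dominated by the top block, and the blocks with $2^j>\varepsilon^{-1}$ decay. So, up to constants, $B_\varepsilon^{p'}$ is controlled by the block $2^j\approx\varepsilon^{-1}$, and
\[
B_\varepsilon\lesssim\varepsilon^{(d-k)/2}\,\varepsilon^{-d(1/2-1/p)}=\varepsilon^{d/p-k/2}.
\]
This exponent is $\ge 0$ exactly when $p\le 2d/k$, which is where the range in the statement enters. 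At the endpoint $p=2d/k$ the bound is merely $O(1)$, and that is all the argument needs, since the smallness comes from the tail factor.

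\textbf{Points I expect to need care.}
\begin{itemize}
\item The derivative bounds for $\chi_\varepsilon$ when $S$ is only $C^1$. This is why I mollify the indicator of the tube at scale $\varepsilon$ rather than composing with the distance function.
\item Using Weyl's law to count distinct eigenvalues per dyadic block. The upper bound $2^{jd}$ is all that is needed, since multiplicities only reduce the number of distinct eigenvalues.
\end{itemize}
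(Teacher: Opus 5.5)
Your proof is correct, and it is essentially the dual of the paper's argument.

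The paper regularizes $u$ itself. With $P_R=\psi(\sqrt{-\Delta_g}/R)$ and $\widehat\psi$ compactly supported, finite propagation speed gives $\operatorname{supp}P_Ru\subset E^{C/R}$. H\"older plus Weyl then gives $\|P_Ru\|_{L^2}\lesssim R^{d/2-d/p}\|u\|_{\widehat\ell^p}$, and pairing with $\chi$ costs $|E^{C/R}|^{1/2}\lesssim R^{(k-d)/2}$. The endpoint $p=2d/k$ needs a separate dyadic decomposition with dominated convergence, and $p=2$ is handled separately.

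You regularize the test function instead. Spatial localization is built into $\chi_\varepsilon$, and spectral localization at scale $\varepsilon^{-1}$ comes from the elementary bound on $\|\Delta_g^N\chi_\varepsilon\|_{L^2}$, so no wave-equation input is needed. The same H\"older--Weyl computation reappears as $B_\varepsilon\lesssim\varepsilon^{d/p-k/2}$, with $\varepsilon^{-1}$ playing the role of $R$. You split into $\lambda\le R_0$ and $\lambda>R_0$ at the level of the pairing, where support information is no longer needed. So the endpoint costs nothing extra: $B_\varepsilon=O(1)$ and the smallness comes from the $\ell^p$ tail, whereas the paper has to extract that tail band by band. Your argument also covers $p=2$, but there you should use Plancherel, $B_\varepsilon=\|\chi_\varepsilon\|_{L^2}\to0$; your geometric-sum remark requires $p>2$.

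What the paper's route buys in exchange is a canonical, $E$-independent approximation $P_Ru\to u$. Its estimate holds for every $\chi$ in terms of $\|\chi\|_{L^\infty}$ alone. That is the stability statement of Theorem~\ref{thm:stability}, and the same support-propagation mechanism is reused in Theorem~\ref{theorem:localFRlowerbound}. Your cutoffs are adapted to $E$ and need routine chart bookkeeping: the radii $2\varepsilon,3\varepsilon$ become $C\varepsilon$, with $C$ depending on the metric distortion of the charts.

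Both arguments use only the tube bound $|E^\delta|\lesssim\delta^{d-k}$ for a compact set carrying $u$. So both tacitly assume that the submanifold in Theorem~\ref{main} is compact, or that $u$ lives on a compact piece of it; the statement does not say this.
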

\noindent
For convenience, we write
\[
\|u\|_{\widehat\ell^p}:=\left(\sum_\lambda \|E_\lambda u\|_{L^2(M)}^p\right)^{1/p}.
\]

\begin{remark}\label{rem:Minkowski}
    The proof of Theorem~\ref{main} shows that the argument still goes through, up to the endpoint, if we assume that $f$ is a signed measure supported on a compact subset of $M$ of upper Minkowski dimension $k$. This is consistent with the Euclidean generalization of Agranovsky--Narayanan due to Senthil~Raani~\cite{SR14}. In particular, the endpoint is captured if the Riemannian volume of the $\delta$-neighborhood of the support satisfies $|E^{\delta}| \leq C \delta^{d-k}$ for all sufficiently small $\delta > 0$.

    \medskip

    \noindent
    Thus, Theorem~\ref{main} extends to sets of fractal nature under suitable volume growth conditions.
\end{remark}

Although we often formulate our results for functions $u\in L^2$, the arguments are not
intrinsically tied to this setting. In many instances, the same Fourier ratio bounds and
spectral estimates apply to finite measures, or more generally to distributions whose
spectral projections satisfy suitable growth conditions. We adopt the $L^2$ formulation
primarily for clarity and simplicity, but the framework allows for a broader class of
objects without essential changes to the analysis.

\subsection{Uniqueness and stability}
Theorem~\ref{main} is the uniqueness component of a more robust synthesis principle on compact manifolds. The next statement complements it with a quantitative stability bound, which may be interpreted as a ``signal recovery'' principle: a signal supported on a thin set cannot have small $\widehat\ell^p$ spectral content unless it is small in the sense of distributions.

\begin{theorem}[Quantitative stability for thinly supported signals]\label{thm:stability}
Let $u$ be a (complex, Radon) measure supported in a compact set $E\subset M$ with upper Minkowski dimension $k<d$. More precisely, assume that there exists a constant $C_E$ such that for all sufficiently small $\delta>0$,
\begin{equation}\label{eq:neigh-vol}
    |E^\delta|\le C_E\,\delta^{d-k},
\end{equation}
where $E^\delta$ denotes the $\delta$-neighborhood of $E$ in $M$ and $|\cdot|$ is Riemannian volume.

Fix $p$ with $2<p<\frac{2d}{k}$ and write
\[
\|u\|_{\widehat\ell^p}:=\left(\sum_\lambda \|E_\lambda u\|_{L^2(M)}^p\right)^{1/p}.
\]
Then there exists an even Schwartz function $\psi$ with $\widehat\psi$ supported in $[-1,1]$ and $\psi(0)=1$, and a constant $C=C(M,g,\psi,C_E,p)$, such that for every $R\ge 1$ and every test function $\chi\in C^\infty(M)$,
\begin{equation}\label{eq:stability-pairing}
\left|\left\langle P_Ru,\chi\right\rangle\right|
\le C\,\|u\|_{\widehat\ell^p}\,\|\chi\|_{L^\infty(M)}\,R^{\frac{k}{2}-\frac{d}{p}},
\end{equation}
where $P_R:=\psi(\sqrt{-\Delta_g}/R)$.
In particular, if $\|u\|_{\widehat\ell^p}$ is small, then all low-pass reconstructions $P_Ru$ are small uniformly on scales $R\gtrsim 1$, with an explicit rate in $R$.
\end{theorem}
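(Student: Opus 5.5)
The plan is to combine finite propagation speed for the wave equation with a single Cauchy--Schwarz/H\"older step, so that the thin support of $u$ and the spectral $\widehat\ell^p$ norm each contribute one power of $R$.

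\emph{Choice of $\psi$ and localization of $P_R u$.} Fix a nonnegative even $\rho\in C_c^\infty(-\tfrac12,\tfrac12)$, not identically zero. Let $\widehat\psi$ be a positive multiple of $\rho*\rho$, normalized so that $\psi(0)=1$. Then $\psi$ is even and Schwartz, and $\widehat\psi$ is supported in $[-1,1]$. By Fourier inversion and spectral calculus,
\[
P_R=\psi(\sqrt{-\Delta_g}/R)=\frac{1}{2\pi}\int_{-1}^{1}\widehat\psi(t)\cos\bigl(tR^{-1}\sqrt{-\Delta_g}\bigr)\,dt .
\]
The kernel of $\cos(s\sqrt{-\Delta_g})$ is supported in $\{d_g(x,y)\le |s|\}$ by finite propagation speed, which holds on any compact manifold. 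Hence the smooth kernel $K_R(x,y)$ of $P_R$ is supported in $\{d_g(x,y)\le 1/R\}$.

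It follows that $P_Ru(x)=\int K_R(x,y)\,du(y)$ is a smooth function vanishing outside $E^{1/R}$. Moreover, $P_Ru=\sum_\lambda\psi(\lambda/R)E_\lambda u$, and this agrees with the distributional definition of $E_\lambda u$ given above. Therefore, for $R$ large enough that \eqref{eq:neigh-vol} applies with $\delta=1/R$,
\[
|\langle P_Ru,\chi\rangle|\le \|\chi\|_{L^\infty}\,|E^{1/R}|^{1/2}\,\|P_Ru\|_{L^2}\le C_E^{1/2}\|\chi\|_{L^\infty}\,R^{-\frac{d-k}{2}}\|P_Ru\|_{L^2}.
\]
For $R$ in the remaining bounded range $1\le R\le R_0$, replace $|E^{1/R}|$ by $|M|$ and absorb the bounded factor $R^{\frac{d-k}{2}}\le R_0^{\frac{d-k}{2}}$ into $C$.

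\emph{Spectral bound for $\|P_Ru\|_{L^2}$.} By orthogonality,
\[
\|P_Ru\|_2^2=\sum_\lambda|\psi(\lambda/R)|^2\|E_\lambda u\|_2^2 .
\]
Apply H\"older with exponents $p/2$ and $q=p/(p-2)$; this is where $p>2$ is used, so that $q<\infty$. This gives
\[
\|P_Ru\|_2^2\le \|u\|_{\widehat\ell^p}^2\Bigl(\sum_\lambda|\psi(\lambda/R)|^{2q}\Bigr)^{1/q}.
\]
The number of distinct eigenvalues $\lambda\le\Lambda$ is at most the count with multiplicity. By Weyl's law that count is $\le C_M(1+\Lambda)^d$. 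Since $\psi$ is Schwartz, decomposing into dyadic shells $\lambda\sim 2^jR$ gives
\[
\sum_\lambda|\psi(\lambda/R)|^{2q}\lesssim R^d .
\]
Hence $\|P_Ru\|_2\lesssim \|u\|_{\widehat\ell^p}R^{\frac{d}{2q}}=\|u\|_{\widehat\ell^p}R^{\frac d2-\frac dp}$.

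\emph{Conclusion.} Combining the two steps gives the exponent
\[
-\frac{d-k}{2}+\frac d2-\frac dp=\frac k2-\frac dp,
\]
which is \eqref{eq:stability-pairing}. The constant depends only on $M$, $g$, $\psi$, $C_E$ and $p$. The main point requiring care is the first step: the exact compact support of the kernel of $P_R$, which is why $\widehat\psi$ must be compactly supported, and the legitimacy of applying $P_R$ to a measure. The latter follows because the $\psi(\lambda/R)$ decay rapidly while $\|E_\lambda u\|_2$ grows at most polynomially for a finite measure. The hypothesis $p<2d/k$ is not needed for the inequality itself. It only makes the exponent $\frac k2-\frac dp$ negative, so that the bound decays in $R$.
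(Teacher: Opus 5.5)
Your proposal is correct and follows essentially the same route as the paper's proof. Finite propagation speed for $\cos(t\sqrt{-\Delta_g})$ confines $P_Ru$ to an $O(1/R)$-neighborhood of $E$; H\"older with exponents $p/2$ and $p/(p-2)$ together with Weyl's law gives $\|P_Ru\|_{L^2}\lesssim R^{\frac d2-\frac dp}\|u\|_{\widehat\ell^p}$; and Cauchy--Schwarz against $\chi$ on that neighborhood supplies the factor $R^{-\frac{d-k}{2}}$. Your extra care (an explicit admissible $\psi$, the bounded range of $R$ where the volume bound need not yet hold, and why $P_R$ may be applied to a measure) fills in details the paper leaves implicit.
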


\begin{remark}[Relation to signal recovery]
Although our focus here is on spectral synthesis and geometric rigidity on compact manifolds,
the quantitative stability estimate in Theorem~\ref{thm:stability} admits a natural interpretation as a
robust identifiability principle for inverse problems. Indeed, for a measure $u$ supported
on a thin set $E$ satisfying the neighborhood volume bound \eqref{eq:neigh-vol}, the theorem shows that the
low-frequency reconstructions $P_R u$ are uniformly small whenever the spectral $\ell^p$
mass of $u$ is small, with an explicit decay rate in $R$. Equivalently, within the class of
thinly supported signals, the map $u \mapsto P_R u$ is stable in the distributional sense
on all scales $R \gtrsim 1$.

This perspective suggests that the methods developed here should be well--suited to a range
of signal recovery problems on manifolds, for example, recovery from partial or coarse
spectral information under structural assumptions on the support. We do not pursue these
directions in the present paper.
\end{remark}

\subsection{Implications for compressed sensing on manifolds}
The quantitative stability estimate in Theorem~\ref{thm:stability} can be viewed as a deterministic precursor to compressed sensing guarantees on manifolds. In Euclidean compressed sensing, a signal that is sparse in space can be recovered from a few random Fourier measurements provided its Fourier transform satisfies suitable decay or incoherence conditions \cite{donoho2006compressed,candes2006near}.

In our setting, a measure $u$ supported on a thin set $E$ with $\|u\|_{\widehat\ell^p}$ small is, in essence, spatially sparse and spectrally compressible. Theorem~\ref{thm:stability} shows that its low-frequency projection~$P_R u$ is small, a property that suggests stability under incomplete spectral measurements.

More concretely, if one observes only a random subset of spectral coefficients $\{\langle u,e_j\rangle\}_{j\in J}$ where $J$ is a random set of eigenfunction indices, the estimate
\[
\bigl|\langle P_R u,\chi\rangle\bigr|
\lesssim
\|u\|_{\widehat\ell^p}\,R^{\frac{k}{2}-\frac{d}{p}}\,\|\chi\|_{L^\infty(M)}.
\]
This 
implies that the energy of $u$ cannot concentrate in high frequencies unless its support is thick, thereby limiting possible aliasing artifacts caused by undersampling.

The eigenfunction growth parameter~$A(\lambda)$ appearing in Theorem~\ref{theorem:localFRlowerbound} plays a role analogous to the coherence of the sensing dictionary in classical compressed sensing.
Manifolds with bounded $A(\lambda)$ (for example, flat tori) exhibit the most favorable incoherence, while spheres, where $A(\lambda)\sim \lambda^{(d-1)/2}$, present a more challenging high-coherence regime.
Thus, our results provide geometric criteria that inform the design of efficient sampling schemes on manifolds: thin spatial support together with moderate eigenfunction growth favors stable recovery from few spectral measurements.

A natural follow-up question is whether random undersampling of the spectral coefficients allows stable recovery via $\ell^1$--minimization, and how the required number of measurements depends on the dimension $k$ of the support and the growth $A(\lambda)$.
We plan to pursue this connection in future work.

\begin{remark}[Conceptual picture]\label{rem:conceptual-picture}
The results in this paper may be viewed as manifestations of a single synthesis principle: thin spatial support imposes strong restrictions on how spectral mass may be distributed. This appears as rigidity (Theorem~\ref{main}), as quantitative stability (Theorem~\ref{thm:stability}), and later as compressibility/approximation and uncertainty phenomena governed by the Fourier ratio. We do not attempt a general optimality theory beyond the sharp thresholds stated; rather, we aim to isolate robust mechanisms that persist across broad classes of compact manifolds.
\end{remark}

At a conceptual level, all results in this paper are governed by a single structural mechanism: quantitative control of how spectral mass may concentrate for distributions supported on geometrically thin sets. The central result is the spectral synthesis theorem for thinly supported measures (Theorem~\ref{main}), which identifies a sharp regime in which $\ell^p$ summability of spectral projections forces uniqueness. The stability estimate in Theorem~\ref{thm:stability} is a quantitative avatar of the same phenomenon, obtained by retaining explicit error terms in the synthesis argument. The approximation results by short spectral sums and the uncertainty principles involving the Fourier ratio reflect this same rigidity, reinterpreted in terms of compressibility rather than vanishing. The spherical examples isolate an extremal geometric setting in which this mechanism becomes maximally rigid, demonstrating that the critical behavior is genuinely geometric rather than a formal extension of Euclidean Fourier analysis.

\begin{figure}[ht]
\centering
\begin{tikzpicture}[scale=1.3, >=stealth]

  \begin{scope}[xshift=-3.2cm]
    \draw[thick, fill=blue!10] (0,0) ellipse (1.2 and 0.6);
    \draw[thick, fill=blue!10] (0,0.08) ellipse (1.2 and 0.6);
    \draw[thick] (1.2,0) arc (0:180:1.2 and 0.6);
    \draw[thick, densely dashed] (-1.2,0) arc (180:360:1.2 and 0.6);
    
    \draw[ultra thick, red!80!black] 
      plot[smooth, tension=0.7] coordinates {(-0.9,-0.2) (-0.4,0.3) (0.15,-0.08) (0.8,0.24)};
    \node[red!80!black, above, font=\small] at (0.15,-0.08) {$S$};
    
    \fill[red] (0.15,-0.08) circle (1.2pt);
    
    \draw[->, thick, green!50!black] (0.15,-0.08) -- ++(0.6,0.6) node[midway, above, sloped, font=\scriptsize] {$E_{\lambda}u$};
    \draw[->, thick, green!50!black] (-0.4,0.3) -- ++(-0.5,0.7) node[midway, above, sloped, font=\scriptsize] {$E_{\lambda}u$};
    
    \node[below, font=\small] at (0,-0.9) {Manifold $M$};
    \node[above, font=\small\bfseries] at (0,1.0) {Spatial support};
  \end{scope}

  \begin{scope}[xshift=0cm]
    \draw[->] (0,0) -- (2.5,0) node[right, font=\small] {$\lambda$};
    \draw[->] (0,0) -- (0,1.8) node[above, font=\small] {$\|E_{\lambda}u\|_{L^2}$};
    
    \foreach \x/\y in {0.2/1.6, 0.5/1.4, 0.8/1.2, 1.1/1.1, 1.4/1.0, 1.7/0.9, 2.0/0.85, 2.3/0.8}
      \draw[very thick, orange!80!red] (\x,0) -- (\x,\y);
    \node[orange!80!red, above, font=\scriptsize] at (1.4,1.0) {Case A};
    
    \foreach \x/\y in {0.2/0.8, 0.5/0.35, 0.8/0.12, 1.1/0.04, 1.4/0.015}
      \draw[very thick, blue!70!green] (\x,0) -- (\x,\y);
    \node[blue!70!green, above, font=\scriptsize] at (0.8,0.12) {Case B};
    
    \draw[thick, dashed] (0,0.7) -- (2.5,0.7) node[midway, above, font=\scriptsize] {$\ell^p$ threshold};
    
    \node[below, font=\small\bfseries] at (1.25,-0.3) {Spectral decay};
    \node[below, align=center, font=\scriptsize] at (1.25,-0.7) 
      {Fast $\ell^p$ decay\\ $\Rightarrow$ rigidity};
  \end{scope}

  \begin{scope}[xshift=3.2cm]
    \draw[->] (0,0) -- (2.5,0) node[right, font=\small] {$x$};
    \draw[->] (0,-1.0) -- (0,1.0);
    
    \draw[ultra thick, blue, smooth] 
      plot[domain=0:2.5, samples=80] (\x, {0.6*sin(deg(3*\x)) + 0.2*sin(deg(7*\x))});
    \node[blue, above, font=\small] at (1.0,0.8) {$f$};
    
    \draw[ultra thick, red!70!black, densely dashed, smooth] 
      plot[domain=0:2.5, samples=40] (\x, {0.55*sin(deg(3*\x))});
    \node[red!70!black, below, font=\scriptsize] at (1.8,-0.6) {$P$ (few modes)};
    
    \draw[->, thick, purple] (1.2,0.4) -- (1.2,-0.4);
    \node[purple, align=center, font=\scriptsize] at (1.2,0) 
      {Small $\mathrm{FR}(f)$\\ $\Rightarrow$\\ good approx.};
    
    \node[below, font=\small\bfseries] at (1.25,-1.2) {Fourier ratio};
  \end{scope}

  \begin{scope}[yshift=-2.2cm]
    \node[anchor=west, align=left, font=\small\bfseries] at (-4.5,0.2) {Key:};
    
    \draw[thick, red!80!black] (-4.2, 0) -- (-3.8, 0) node[right, black, font=\scriptsize] {Thin support $S$};
    \draw[thick, orange!80!red] (-4.2, -0.4) -- (-3.8, -0.4) node[right, black, font=\scriptsize] {Slow decay};
    \draw[thick, blue!70!green] (-4.2, -0.8) -- (-3.8, -0.8) node[right, black, font=\scriptsize] {Fast $\ell^p$ decay};
    \draw[thick, purple] (-4.2, -1.2) -- (-3.8, -1.2) node[right, black, font=\scriptsize] {$\mathrm{FR}(f)$};
    
    \node[align=center, font=\scriptsize] at (0, -0.2) 
      {Sphere:\\ maximal rigidity};
    \node[align=center, font=\scriptsize] at (0, -1.0) 
      {Torus:\\ sharp $p=\frac{2d}{k}$};
    
    \node[align=center, font=\scriptsize, text width=5cm] at (3.5, -0.6) 
      {Thin support + $\sum\|E_\lambda u\|^p<\infty$\\ $\Rightarrow u\equiv 0$ (Theorem~\ref{main})};
  \end{scope}

\end{tikzpicture}
\caption{Conceptual illustration of spectral synthesis on a compact Riemannian manifold $M$. 
\textbf{(Left)} A thinly supported measure $u$ on a submanifold $S \subset M$ and its spectral projections $E_\lambda u$. 
\textbf{(Middle)} Spectral mass $\|E_\lambda u\|_{L^2}$ vs. frequency $\lambda$: rapid $\ell^p$ decay (Case B) forces $u=0$ under $p \leq 2d/k$ (Theorem~\ref{main}). 
\textbf{(Right)} The Fourier ratio $\mathrm{FR}(f)$ controls approximation by short spectral sums (Theorem~\ref{theorem:manifoldfourierratio}). 
Bottom: key concepts and geometry-dependent sharpness.}
\label{fig:concept}
\end{figure}
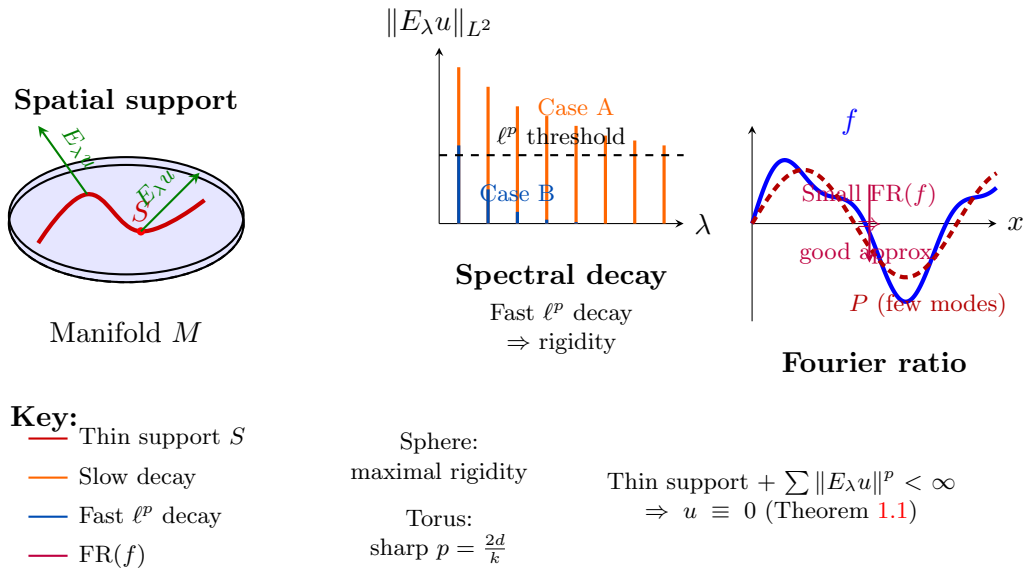

To illustrate the preceding discussion, consider the case $M=\mathbb T^d$, the $d$-dimensional torus. The eigenfunctions are the exponentials $e_j(x)=e^{2\pi i x\cdot j}$, so the indices $j$ contributing to the eigenvalue $-\lambda^2$ are
\[
\left\{j \in \mathbb Z^d:\ \sqrt{j_1^2+\cdots+j_d^2}=\lambda \right\}.
\]
Thus $E_\lambda$ projects a signal onto the frequencies lying on the sphere of radius $\lambda$, where $\lambda$ is the square root of a nonnegative integer. In this setting, the sharpness examples constructed by Agranovsky and Narayanan in the Euclidean case go through with little change; in sharp contrast, if eigenfunctions exhibit large $L^\infty$ growth (as they do on certain manifolds), the synthesis threshold may lose its Euclidean meaning.

\subsection{Sharpness and lack thereof}
Agranovsky and Narayanan \cite{AN04} proved that the exponent $\frac{2d}{k}$ is, in general, sharp if $S$ has dimension at least $\frac d2$. Similar examples can be constructed in the context of this paper if $\mathbb R^d$ is replaced by $\mathbb T^d$. When the dimension is smaller, this sharpness may fail. For example, if $S=\{(t,t^2,\dots,t^d): t\in[0,1]\}$ (or a small perturbation thereof), Guo, Iosevich, Zhang, and Zorin-Kranich \cite{GIZZ2024} showed that if $\widehat f$ is carried by a smooth measure on this moment curve and $f\in L^p(\mathbb R^d)$ for $p<\frac{d^2+d+2}{2}$, then $f$ is identically $0$. We are about to see that in the Riemannian setting, lack of sharpness in the classical estimate may result from the underlying geometry of the manifold.

\vskip.125in

In sharp contrast with the Euclidean and toral settings, the following result shows that on the sphere the synthesis phenomenon becomes maximally rigid: there is no finite sharp $L^p$ threshold.

\begin{proposition}[Lack of sharpness on the sphere]\label{prop:sphere-lack-sharpness}
Let $M = S^d$ be the $d$-dimensional sphere with the standard metric, and let $u$ be a $C^\infty$ surface-carried measure on a smooth embedded submanifold $S$. Then
\[
   \sum_\lambda \|E_\lambda u\|_{L^2(M)}^p < \infty \qquad \text{for some } p < \infty
\]
implies $u = 0$. Furthermore, if $u \neq 0$, then
\[
    \sup_\lambda \|E_\lambda u\|_{L^2(M)} < \infty
\]
if and only if $S$ is a hypersurface.
\end{proposition}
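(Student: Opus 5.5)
We exploit the fact that on the round sphere the spectrum is clustered at integer frequencies, $\lambda_m=\sqrt{m(m+d-1)}$, with each eigenspace $\mathcal H_m$ of dimension $\sim m^{d-1}$. The plan has two parts.

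\textbf{Step 1: lower bound for the spectral mass.} Fix a smooth nonnegative cutoff, or directly the Fourier--Laplace structure, and compare $\|E_{\lambda_m}u\|_{L^2}^2=\langle E_{\lambda_m}u,u\rangle$ with an average over a frequency window. The key input is the zonal kernel
\[
E_{\lambda_m}(x,y)=\frac{\dim\mathcal H_m}{|S^d|}\,\frac{P_m^{(d)}(\cos d(x,y))}{P_m^{(d)}(1)},
\]
where $P^{(d)}_m$ is the Gegenbauer polynomial. Hence
\[
\|E_{\lambda_m}u\|^2=\iint E_{\lambda_m}(x,y)\,du(x)\,d\bar u(y).
\]
Write $u=a\,d\sigma_S$ with $a\in C^\infty_c(S)$. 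Use the Hilb/Darboux asymptotics
\[
P_m(\cos\theta)/P_m(1)\approx c\,(m\theta)^{-(d-1)/2}\cos\bigl((m+\tfrac{d-1}2)\theta-\phi\bigr)
\]
for $\theta\gg 1/m$ and $\theta$ away from $\pi$, plus an antipodal contribution. Averaging in $m$ over a window $[N,2N]$ kills the oscillatory far-field part up to $O(N^{-\infty})$ corrections, modulo the antipodal and periodic effects. The near-diagonal region $d(x,y)\lesssim 1/N$ then contributes
\[
\frac1N\sum_{m\sim N}\|E_{\lambda_m}u\|^2\gtrsim N^{d-1}\cdot N^{-k}\!\int_S|a|^2\,d\sigma\sim N^{d-1-k}.
\]
This is the spherical analogue of the local lower bound behind Theorem~\ref{main}. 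It is a Tauberian-type argument; I would rather use $\sum_m \rho(m/N)\|E_{\lambda_m}u\|^2=\langle \rho(A/N)u,u\rangle$, where $A=\sqrt{-\Delta+((d-1)/2)^2}-\tfrac{d-1}{2}$ has integer spectrum. Since $A$ is a classical pseudodifferential operator, the principal symbol calculus for $u$ as a conormal distribution to $S$ (wavefront set in $N^*S$) gives
\[
\langle\rho(A/N)u,u\rangle=cN^{d-k}\Bigl(\int\rho\,r^{d-k-1}dr\Bigr)\int_S|a|^2+O(N^{d-k-1}).
\]
The averaging over $N$ must remove the $2\pi$-periodic wave propagator recurrences (the antipodal map), which enter only at lower order when $\rho\ge0$ and $\widehat\rho$ is not localized. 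I expect this rigorous asymptotic to be the main obstacle, because of the periodic geodesic flow. I would first try to prove the inequality only as a lower bound using positivity: with $\rho\ge 0$, the quantity $\langle\rho(A/N)u,u\rangle$ is at least the contribution of a microlocalized piece.

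\textbf{Step 2: the consequences.} From $\sum_{m\sim N}\|E_{\lambda_m}u\|^2\gtrsim N^{d-k}$ and the fact that each dyadic block has only $N$ terms, we get $\sum_{m\sim N}\|E_{\lambda_m}u\|^p\gtrsim N\cdot (N^{d-k-1})^{p/2}$ by Hölder. For $k\le d-1$ this is $\gtrsim N$, which is not summable over dyadic $N$ for any $p<\infty$; hence $u=0$, giving the first claim. If $k<d-1$, the same estimate yields $\max_{m\sim N}\|E_{\lambda_m}u\|^2\gtrsim N^{d-k-1}\to\infty$, so $\sup_\lambda\|E_\lambda u\|_{L^2}=\infty$.

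For the converse with $k=d-1$, we need the upper bound $\|E_{\lambda_m}u\|_{L^2}\lesssim 1$. By duality, $\|E_{\lambda_m}u\|^2=\sup_{\|f\|\le1}|\int_S a\,\overline{E_{\lambda_m}f}|^2$, so this reduces to the restriction bound $\|E_{\lambda_m}f\|_{L^2(S)}\lesssim \lambda_m^{1/2}\|f\|$ (Burq--Gérard--Tzvetkov). That bound loses $\lambda^{1/2}$, so it is not enough. Instead, I would use the spectral cluster version: on the sphere $E_{\lambda_m}=\chi_m$, an exact cluster of width $1$. The quantity $\|\chi_m u\|^2$ is bounded by $\langle\rho(A-m)u,u\rangle$ with $\rho$ Schwartz. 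This equals $\int\widehat\rho(t)e^{-itm}\langle e^{itA}u,u\rangle\,dt$, where the integral over $t\in[0,2\pi]$ suffices by periodicity. For a hypersurface, $|\langle e^{itA}u,u\rangle|$ is integrable in $t$ near $t=0$ (it behaves like $|t|^{-(d-k)/2\cdot 2+\dots}$, i.e.\ like $|t|^{-1}$ only at the borderline, requiring care). At other times $t$ it is bounded, by transversality of the flowout of $N^*S$ with $N^*S$ generically. I would handle the borderline at $t=0$ directly via the kernel computation of Step~1, where the $m$-th term gives $N^{d-1-k}=N^0$.
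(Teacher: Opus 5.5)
You have a genuine gap in the ``if'' direction (hypersurface $\Rightarrow$ bounded spectral projections). The rest is fine. Your first claim and the ``only if'' direction are sound and parallel the paper. The paper pigeonholes Zelditch's Kuznecov asymptotic \eqref{eq:steve} over windows of fixed length $T$. You use dyadic windows and $\langle\rho(A/N)u,u\rangle$, whose leading term follows from the semiclassical functional calculus alone. So, contrary to your worry in Step~1, the periodic geodesic flow plays no role there. One small point: for $p<2$ your H\"older step runs the wrong way, but then $\sum_{m\sim N}\|E_{\lambda_m}u\|^p\ge(\sum_{m\sim N}\|E_{\lambda_m}u\|^2)^{p/2}$ suffices.

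For $k=d-1$ you need $\|E_{\lambda_m}u\|_{L^2}=O(1)$ for each single $m$, i.e.\ control on a window of width $O(1)$. No dyadic average provides this, and your wave-group argument does not deliver it, for two reasons.

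First, your assertion that $\langle e^{itA}u,u\rangle$ is bounded away from $t\in 2\pi\mathbb Z$ ``by generic transversality'' is false on the sphere. It would in any case only cover generic $S$, while the claim is for every hypersurface.
For the equator measure, $e^{i\pi A}$ is pullback by the antipodal map, which fixes $u$. So $t=\pi$ carries a singularity exactly as strong as the one at $t=0$; this is reflected in the vanishing of $\|E_{\sqrt{l(l+1)}}u\|_{L^2}$ for odd $l$ in the paper's example.
For a latitude circle at colatitude $\theta_0$, the normal geodesics through the pole return normally to $S$ at $t=2\theta_0$.

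Second, at $t=0$ the singularity has size $|t|^{-1}$ when $d-k=1$, so it is not absolutely integrable. Referring back to the averaged kernel computation of Step~1 gives no single-$m$ upper bound.

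The missing step is as follows. Choose $\rho\ge0$ with $\rho(0)>0$ and $\widehat\rho$ supported in $|t|<\epsilon$. Take $\epsilon$ smaller than the shortest geodesic leaving $\operatorname{supp}a$ normally and returning to it normally; this length is positive by the tubular neighbourhood theorem. Then only the $t=0$ singularity enters. Evaluate its Fourier transform as a distribution through the short-time parametrix, rather than bounding it in absolute value. This gives $\sum_j\rho(m-\mu_j)|\langle u,e_j\rangle|^2=c\,m^{d-k-1}+O(m^{d-k-2})$, where $\mu_j$ are the eigenvalues of $A$, and hence $\|E_{\lambda_m}u\|_{L^2}^2\le\rho(0)^{-1}(c+O(m^{-1}))$. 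This is exactly the $O(\lambda^{d-k-1})$ remainder in \eqref{eq:steve}. The paper reads the bound off from it as $\|E_\lambda u\|_{L^2}^2\le\sum_{\lambda_j\in(\lambda-1,\lambda]}|\int_S f\,\overline{e_j}|^2=O(1)$.
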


\subsection{Explicit example on $S^2$}

Consider $M=S^2$ with the standard metric, and let $S$ be the equator $\{ \theta = \pi/2 \}$.
Let $u = \delta_S$ be the normalized arclength measure on $S$. 
The spherical harmonics $Y_l^m$ ($l>0, |m|\leq l$) form an orthonormal basis of $L^2(S^2)$ (with respect to the standard surface measure) and are eigenfunctions of the Laplace-Beltrami operator:  $$\Delta_{S^2} Y_l^m = -l(l+1) Y_l^m.$$

A direct computation shows that only the $m=0$ harmonics contribute:
\[
\langle u, Y_l^m \rangle = 
\begin{cases}
2\pi \sqrt{\frac{2l+1}{4\pi}} P_l(0), & m=0, \\[4pt]
0, & m \ne 0,
\end{cases}
\]
where $P_l$ is the Legendre polynomial. Consequently,
\[
\|E_{\sqrt{l(l+1)}} u\|_{L^2}^2 = \pi(2l+1) |P_l(0)|^2.
\]
For even $l$, $|P_l(0)| \sim \sqrt{2/(\pi l)}$ as $l\to\infty$, hence
\[
\|E_{\sqrt{l(l+1)}} u\|_{L^2}  \to 2, \quad (\text{as} ~~ l\to \infty).
\]
Thus, $\|E_\lambda u\|_{L^2}$ is bounded below by a positive constant for infinitely many $\lambda$. 
Since the number of distinct eigenvalues $\lambda \le \Lambda$ grows linearly in $\Lambda$, the sum
\[
\sum_{\lambda} \|E_\lambda u\|_{L^2}^p
\]
diverges for every $p<\infty$, while $u\neq 0$. 
This illustrates the ``maximal rigidity'' stated in Proposition~\ref{prop:sphere-lack-sharpness}: on the sphere, no finite $p$ can replace the threshold $p\le 2d/k$ valid on the torus.

\vskip.125in 

The following is an immediate consequence of Proposition~\ref{prop:sphere-lack-sharpness}.

\begin{corollary}\label{cor:sphere}
Let $M = S^d$ be the $d$-dimensional sphere with the standard metric, and let $u$ be a $C^\infty$ surface-carried measure on a $C^\infty$ embedded $k$-dimensional submanifold $S$ with $1 \leq k < d$ and
\[
    \sum_\lambda \|E_\lambda u\|_{L^2(M)}^p < \infty
\]
for some $p<\infty$. Then, $u = 0$.
\end{corollary}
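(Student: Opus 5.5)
Corollary~\ref{cor:sphere} is a direct specialization of Proposition~\ref{prop:sphere-lack-sharpness}, so the proof will simply check the hypotheses and invoke it. A $C^\infty$ embedded $k$-dimensional submanifold $S\subset S^d$ with $1\le k<d$ is in particular a smooth embedded submanifold. A $C^\infty$ surface-carried measure on $S$ is exactly the class of measures considered in the proposition. The summability assumption $\sum_\lambda \|E_\lambda u\|_{L^2(M)}^p<\infty$ for some finite $p$ is verbatim the first hypothesis there. Hence the first assertion of the proposition yields $u=0$. The dimension restriction $1\le k<d$ plays no further role beyond guaranteeing that $S$ is a genuine thin set, which excludes the trivial case $k=d$ where smooth densities on open sets would have rapidly decaying spectral projections.

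If one wanted to avoid relying on the full strength of the first assertion, the second assertion could also be used for a self-contained argument. Summability for some finite $p$ forces $\|E_\lambda u\|_{L^2}\to 0$, so in particular $\sup_\lambda\|E_\lambda u\|_{L^2}<\infty$. If $u\neq 0$, the proposition would then force $S$ to be a hypersurface. In that case one must show the projections do not tend to zero. On $S^d$, all eigenfunctions in a cluster share the same frequency, and the periodic geodesic flow makes $E_\lambda$ behave like a spectral window of width $O(1)$. The explicit equator computation above is the model: there $\|E_\lambda u\|_{L^2}\to 2$. One therefore expects $\|E_\lambda u\|_{L^2}\gtrsim 1$ along infinitely many $\lambda$. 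Because the number of distinct eigenvalues up to $\Lambda$ grows linearly, the series would diverge.

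The only possible obstacle lies in this second route, namely the lower bound for general hypersurfaces. The direct route avoids it entirely, so the corollary follows immediately from the first statement of Proposition~\ref{prop:sphere-lack-sharpness}.
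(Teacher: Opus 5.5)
Your proposal is correct and matches the paper, which states the corollary as an immediate consequence of the first assertion of Proposition~\ref{prop:sphere-lack-sharpness}. The lower bound you flag as the obstacle in your optional second route is exactly what the proposition's pigeonhole step supplies: Zelditch's asymptotic gives, in each window of length $T$, some $\lambda'$ with $\|E_{\lambda'}u\|_{L^2}^2\gtrsim \lambda^{d-k-1}/T\gtrsim 1/T$. The direct route you chose does not need it.
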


\begin{remark} \label{remark:regularity-sphere}
We do not know if it is reasonable to expect that Corollary~\ref{cor:sphere} can be extended to, say, $u \in L^1_{\mathrm{loc}}(S^d)$. The argument for the proposition uses Zelditch's Kuznecov asymptotic sum formula, which relies on the symbol calculus for the main term and on a stationary phase expansion to finite order. This results in some dependence on derivatives. We shall investigate this issue in detail in the sequel.
\end{remark}

\subsection{Spectral polynomial approximation}
Theorems~\ref{main} and \ref{thm:stability} show that small spectral integrability for a signal supported on a thin subset forces uniqueness and stability. Our next result shows that a small spectral ``Fourier ratio'' implies that the signal can be well-approximated in $L^2(M)$ by spectral polynomials of low complexity.

\vskip.125in

Let
\[
    \|f\|_{\widehat \ell^1} := \sum_\lambda \|E_\lambda f\|_{L^2(M)},
\qquad
    \FR(f) := \frac{\|f\|_{\widehat \ell^1}}{\|f\|_{L^2(M)}}.
\]

\begin{theorem}\label{theorem:manifoldfourierratio}
Let $f$ be a nontrivial $\widehat \ell^1$ function on $M$. Then for each $k$ satisfying
\[
k > \frac{\FR(f)^2 - 1}{\eta^2},
\]
there exists a linear combination $P$ of $k$ eigenfunctions (all from same eigenspace) on $M$ for which
\[
\|P - f\|_{L^2(M)} < \eta \|f\|_{L^2(M)}.
\]
\end{theorem}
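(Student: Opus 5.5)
The plan is the Maurey empirical approximation argument, run on the spectral decomposition $f=\sum_\lambda E_\lambda f$. I interpret "eigenfunction" in the statement as a normalized spectral piece $E_\lambda f/\|E_\lambda f\|_{L^2}$: each such function is an eigenfunction of $\Delta_g$ lying in a single eigenspace, and $P$ will be a combination of at most $k$ of them.

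First I would set up notation. Put $a_\lambda:=\|E_\lambda f\|_{L^2(M)}$ and $S:=\|f\|_{\widehat\ell^1}=\sum_\lambda a_\lambda<\infty$. For $a_\lambda\neq 0$ let $g_\lambda:=E_\lambda f/a_\lambda$, which is a unit-norm eigenfunction. Orthogonality of the projections gives $\|f\|_{L^2}^2=\sum_\lambda a_\lambda^2$. Since $f\in\widehat\ell^1$, the series $\sum_\lambda E_\lambda f$ converges absolutely in $L^2$ to $f$, so
\[
f=\sum_\lambda \frac{a_\lambda}{S}\,\bigl(S g_\lambda\bigr).
\]
This exhibits $f$ as the mean of the $L^2$-valued random variable $X:=S g_{\Lambda}$, where $\Lambda$ is drawn with $\Pr(\Lambda=\lambda)=a_\lambda/S$.

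Next I would compute the variance of $X$:
\[
\mathbb E\|X-f\|_{L^2}^2=\mathbb E\|X\|^2-\|f\|^2=S^2-\|f\|_{L^2}^2=\|f\|_{L^2}^2\bigl(\FR(f)^2-1\bigr).
\]
Now take $\Lambda_1,\dots,\Lambda_k$ i.i.d. with this law and set $P:=\frac1k\sum_{i=1}^k S g_{\Lambda_i}$. This $P$ is a linear combination of at most $k$ eigenfunctions $g_{\Lambda_i}$, each lying in a single eigenspace. The centered summands are independent with mean zero, so the cross terms vanish in the Hilbert space $L^2(M)$, and
\[
\mathbb E\|P-f\|_{L^2}^2=\frac{\|f\|_{L^2}^2(\FR(f)^2-1)}{k}<\eta^2\|f\|_{L^2}^2
\]
by the hypothesis on $k$. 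Some realization of $P$ is at most the expectation, so the desired $P$ exists. If repeated indices occur, I would merge them, which leaves fewer than $k$ terms; I would then pad with zero coefficients, or simply read "$k$" as "at most $k$".

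The only delicate point is the strict inequality in the degenerate case $\FR(f)=1$. By Cauchy–Schwarz, $\FR(f)\ge 1$, with equality exactly when $f$ lies in a single eigenspace. In that case I take $P=f$, which is itself one eigenfunction, and the error is $0<\eta\|f\|$. Otherwise the expected error is strictly below $\eta^2\|f\|^2$, so a realization with strict inequality exists.

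Beyond this, the main things to check are routine: that the probabilistic averaging is legitimate in $L^2$, which follows from absolute convergence and there being only countably many $\lambda$, and that the variance identity $\mathbb E\|X\|^2=\sum_\lambda (a_\lambda/S)S^2=S^2$ holds.
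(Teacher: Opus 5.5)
Your proposal is correct and is essentially the paper's proof: the paper uses the same Maurey-type sampling, drawing $\lambda$ with probability $\|E_\lambda f\|_{L^2}/\|f\|_{\widehat\ell^1}$ and averaging $k$ i.i.d.\ copies of $\|f\|_{\widehat\ell^1}E_\lambda f/\|E_\lambda f\|_{L^2}$ to get $\E\|P-f\|_{L^2}^2=\frac1k\|f\|_{L^2}^2(\FR(f)^2-1)$. The differences are cosmetic. The paper computes the variance pointwise in $x$ and then integrates, where you work directly with $L^2$-valued random variables. Your separate treatment of $\FR(f)=1$ is also unnecessary, since in that case the expected error is $0<\eta^2\|f\|_{L^2}^2$.
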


The preceding theorem is one-directional: small $\FR(f)$ forces spectral approximability.
A partial converse can be stated cleanly under an a priori band-limitation hypothesis, which prevents the $\widehat\ell^1$ size of the error from being arbitrarily large.

\begin{definition}[Band-limited functions]\label{def:bandlimited}
Given $\Lambda>0$, we say $f\in L^2(M)$ is \emph{$\Lambda$-band-limited} if $E_\lambda f\equiv 0$ for all spectral parameters $\lambda>\Lambda$ (i.e., $f$ has no spectral components above $\Lambda$).
\end{definition}

\begin{theorem}[A band-limited converse bound for $\FR$]\label{thm:FR-converse-bandlimited}
Fix $\Lambda>0$ and suppose $f\in L^2(M)$ is $\Lambda$-band-limited and nontrivial. Assume there exists a linear combination $P$ of $k$ Laplace--Beltrami eigenfunctions such that
\begin{equation*}
\|f-P\|_{L^2(M)}\le \eta\,\|f\|_{L^2(M)}
\qquad\text{for some }0<\eta<1.
\end{equation*}
Let $N(\Lambda)=\#\{j\in\mathbb N:\lambda_j\le \Lambda\}$ be the Weyl counting function (with multiplicity).
Then
\begin{equation}\label{eq:FR-converse-bound}
\FR(f)
\le
(1+\eta)\sqrt{k}+\eta \sqrt{N(\Lambda)}.
\end{equation}
\end{theorem}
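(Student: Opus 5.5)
The plan is to bound $\|f\|_{\widehat\ell^1}$ by the triangle inequality, writing $f = P + (f-P)$, and to control each piece with a Cauchy--Schwarz estimate. Since $\|\cdot\|_{\widehat\ell^1}$ is a norm (a sum of seminorms $\|E_\lambda\cdot\|_{L^2}$), we get
\[
\|f\|_{\widehat\ell^1}\le \|P\|_{\widehat\ell^1}+\|f-P\|_{\widehat\ell^1}.
\]
Dividing by $\|f\|_{L^2(M)}$ then gives \eqref{eq:FR-converse-bound}, once we show $\|P\|_{\widehat\ell^1}\le \sqrt{k}\,\|P\|_{L^2}$ and $\|P\|_{L^2}\le (1+\eta)\|f\|_{L^2}$, together with $\|f-P\|_{\widehat\ell^1}\le \sqrt{N(\Lambda)}\,\eta\|f\|_{L^2}$.

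\textbf{The term $P$.} We can choose the $k$ eigenfunctions to be members of the orthonormal basis $\{e_j\}$; if they are arbitrary eigenfunctions, we first split each into its eigenspace projection. Then $E_\lambda P\neq 0$ for at most $k$ distinct values of $\lambda$. Cauchy--Schwarz over these at most $k$ terms gives
\[
\sum_\lambda\|E_\lambda P\|_{L^2}\le \sqrt{k}\Bigl(\sum_\lambda\|E_\lambda P\|_{L^2}^2\Bigr)^{1/2}=\sqrt{k}\,\|P\|_{L^2},
\]
by orthogonality of the eigenspaces. Finally, $\|P\|_{L^2}\le \|f\|_{L^2}+\|f-P\|_{L^2}\le(1+\eta)\|f\|_{L^2}$.

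\textbf{The error term.} This is the main obstacle, because $P$ need not be band-limited, so $f-P$ could have spectral components above $\Lambda$. I would handle this by replacing $P$ with its truncation $\tilde P=\sum_{\lambda\le\Lambda}E_\lambda P$. This truncation is still a combination of at most $k$ eigenfunctions. Since $f$ is $\Lambda$-band-limited, orthogonality gives
\[
\|f-\tilde P\|_{L^2}^2=\|f-P\|_{L^2}^2-\sum_{\lambda>\Lambda}\|E_\lambda P\|_{L^2}^2\le \eta^2\|f\|_{L^2}^2.
\]
So we may assume without loss of generality that $P$, and hence $f-P$, is $\Lambda$-band-limited. The bound on $\|P\|_{L^2}$ from the previous step still holds after truncation, since it used only $\|f-P\|_{L^2}\le\eta\|f\|_{L^2}$.

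Now $f-P$ has nonzero projections only for the distinct $\lambda\le\Lambda$. The number of such $\lambda$ is at most $N(\Lambda)$, since each distinct eigenvalue is counted at least once with multiplicity. Cauchy--Schwarz and orthogonality then give
\[
\|f-P\|_{\widehat\ell^1}\le\sqrt{N(\Lambda)}\,\|f-P\|_{L^2}\le \eta\sqrt{N(\Lambda)}\,\|f\|_{L^2}.
\]
Combining this with the bound for $P$ and dividing by $\|f\|_{L^2}$, which is nonzero by nontriviality, gives \eqref{eq:FR-converse-bound}.
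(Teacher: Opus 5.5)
Your proof is correct and follows essentially the same route as the paper. You truncate $P$ to the window $\lambda\le\Lambda$, apply the triangle inequality for $\|\cdot\|_{\widehat\ell^1}$, and use Cauchy--Schwarz over the at most $k$ active eigenspaces of $P$ and the at most $N(\Lambda)$ eigenvalues of the residual, with your orthogonality computation making explicit the truncation step the paper only asserts parenthetically.
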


\begin{remark}
The inequality \eqref{eq:FR-converse-bound} is meaningful when $k\ll N(\Lambda)$ and $\eta$ is small: in that regime, a strong $k$-term approximation forces $\FR(f)$ to be $\lesssim \sqrt{k}$. In the absence of a band-limitation hypothesis, no bound of the form $\FR(f)\lesssim \sqrt{k}$ can hold for an arbitrary $L^2$ approximation error, since the $\widehat\ell^1$ size of the error may be arbitrarily large.
\end{remark}

\subsection{Properties of the Fourier ratio and the uncertainty principle}
We now prove lower and upper bounds on the Fourier ratio and show that it serves as a controlling parameter in a manifold variant of the classical Fourier uncertainty principle. We start with a lower bound.

\begin{theorem}[Scale-dependent uncertainty principle, band-limited version] \label{theorem:localFRlowerbound}
Suppose there exists a nondecreasing function $A:[0,\infty)\to(0,\infty)$ such that for every Laplace--Beltrami 
eigenfunction $\lambda$
\begin{equation}\label{eq:eig_growth_113}
\left(\sum_{\lambda_j=\lambda}|e_j(x)|^2\right)^{1/2}\le A(\lambda)
\end{equation}
for all $x\in M$ and all $\lambda\in\operatorname{spec}(\sqrt{-\Delta_g})$. Let $\psi\in\mathcal{S}(\mathbb{R})$ be even and assume $\widehat{\psi}$ has compact support. 
Let $C_\psi>0$ such that $\inf\{t: ~ |\psi(t)|>C_\psi\} >0$, 
and also fix a constant $c_\psi>0$ such that $|\psi(t)|\ge c_\psi$ for $|t|\le C_\psi$. For $R\ge 1$ define the spectral multiplier
\[
P_R=\psi\left(\frac{\sqrt{-\Delta_g}}{R}\right),
\qquad
P_R f=\sum_{\lambda}\psi(\lambda/R)E_\lambda f.
\]
There exist constants $c_0,C_0>0$ depending only on $(M,g)$ and $\psi$ such that the following holds. For every $f\in L^2(M)$ supported in a measurable set $E\subset M$ and every $R\ge 1$,
\begin{equation*}
\operatorname{supp}(P_R f)\subset E_{C_0/R}.
\end{equation*}
Assume in addition that $f$ is spectrally localized at scale $R$ in the sense that
\[
E_\lambda f=0 \qquad \hbox{whenever }|\lambda|>C_\psi R.
\]
Define the localized Fourier ratio at scale $R$ by
\begin{equation*}
\FR_R(f):=\frac{\sum_\lambda |\psi(\lambda/R)|\,\|E_\lambda f\|_{L^2(M)}}{\left(\sum_\lambda |\psi(\lambda/R)|^2\,\|E_\lambda f\|_{L^2(M)}^2\right)^{1/2}},
\end{equation*}
and define the eigenfunction growth parameter
\[
A_R:=\sup\{A(\lambda):\lambda\in\spec(\sqrt{-\Delta_g}),|\lambda|\le C_\psi R\}.
\]
Then $A_R<\infty$ and one has the lower bound
\begin{equation}\label{eq:lower_FRR_113}
\FR_R(f)\ge \frac{c_0}{A_R R^{d/2}\sqrt{|E_{C_0/R}|}}.
\end{equation}
\end{theorem}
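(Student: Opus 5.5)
The plan has three steps: a finite propagation speed argument for the support claim, the spectral localization hypothesis to recover $f$ from $P_Rf$, and a pointwise bound via the eigenfunction growth condition \eqref{eq:eig_growth_113} combined with Cauchy--Schwarz.

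\emph{Step 1: support of $P_Rf$.} Since $\psi$ is even, write
\[
P_R=\frac{1}{2\pi}\int_{\mathbb R}\widehat\psi(t)\cos\bigl(t\sqrt{-\Delta_g}/R\bigr)\,dt .
\]
Because $\widehat\psi$ is supported in a compact interval $[-T,T]$, only times $|t|/R\le T/R$ enter. By finite propagation speed for the wave equation, the kernel of $\cos(s\sqrt{-\Delta_g})$ vanishes outside $\{d_g(x,y)\le |s|\}$. Hence $\operatorname{supp}(P_Rf)\subset E_{C_0/R}$ with $C_0=T$.

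\emph{Step 2: relating $\FR_R(f)$ to a ratio of norms.} Put $g=P_Rf$, so $E_\lambda g=\psi(\lambda/R)E_\lambda f$. Then the denominator of $\FR_R(f)$ is $\|g\|_{L^2}$ and the numerator is $\|g\|_{\widehat\ell^1}$. For each $x$,
\[
|E_\lambda g(x)|=\Bigl|\sum_{\lambda_j=\lambda}\langle g,e_j\rangle e_j(x)\Bigr|\le \|E_\lambda g\|_{L^2}\Bigl(\sum_{\lambda_j=\lambda}|e_j(x)|^2\Bigr)^{1/2}\le A(\lambda)\|E_\lambda g\|_{L^2}.
\]
Only $\lambda\le C_\psi R$ contribute, by the spectral localization hypothesis $E_\lambda f=0$ for $\lambda>C_\psi R$. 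Summing over these $\lambda$ gives
\[
\|g\|_{L^\infty}\le A_R\|g\|_{\widehat\ell^1}.
\]
Here $A_R<\infty$ because $A$ is nondecreasing and finite, so $A_R\le A(C_\psi R)$.

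\emph{Step 3: Cauchy--Schwarz on the support.} By Step 1,
\[
\|g\|_{L^2}^2\le |E_{C_0/R}|\,\|g\|_{L^\infty}^2,
\qquad\text{so}\qquad
\|g\|_{L^2}\le \sqrt{|E_{C_0/R}|}\,A_R\,\|g\|_{\widehat\ell^1}.
\]
This yields
\[
\FR_R(f)\ge \frac{1}{A_R\sqrt{|E_{C_0/R}|}} .
\]
That is already stronger than \eqref{eq:lower_FRR_113} with $c_0=1$ whenever $R^{d/2}\ge1$. Since $R\ge1$, $R^{-d/2}\le 1$, so the stated bound follows.

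\emph{Expected subtlety.} If one instead wants a bound that remembers $f$ rather than $g$, one uses $|\psi|\ge c_\psi$ on $[0,C_\psi R]$. Together with the localization hypothesis, this compares $\|g\|_{L^2}$ with $c_\psi\|f\|_{L^2}$, and the factor $R^{d/2}$ may absorb the Weyl-type loss from bounding $A_R$ by $R^{(d-1)/2}$ on general manifolds. I expect the main care to be needed in Step 1: checking that the support statement does not require spectral localization, and confirming that the cosine-transform representation is legitimate for $f\in L^2$. Both follow from the spectral theorem together with the standard finite propagation speed result.
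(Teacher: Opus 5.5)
Your proof is correct and follows the paper's argument. Finite propagation speed applied to the cosine representation of $P_R$ gives the support claim; the pointwise bound $|E_\lambda f(x)|\le A(\lambda)\|E_\lambda f\|_{L^2}$ plus band-limitation gives $\|P_Rf\|_{L^\infty}\le A_R\sum_\lambda|\psi(\lambda/R)|\,\|E_\lambda f\|_{L^2}$; and $\|P_Rf\|_{L^2}\le|E_{C_0/R}|^{1/2}\|P_Rf\|_{L^\infty}$, with $\|P_Rf\|_{L^2}$ equal to the denominator of $\FR_R(f)$, finishes it.

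The only difference is that the paper inserts the factor $R^{d/2}$ through an extra Weyl-law Cauchy--Schwarz bound and the lower bound $|\psi|\ge c_\psi$, which only weakens the estimate. You instead note that the sharper bound $\FR_R(f)\ge (A_R|E_{C_0/R}|^{1/2})^{-1}$ already implies the stated one with $c_0=1$, since $R\ge 1$.
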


\begin{remark}
In \eqref{eq:eig_growth_113} the quantity $\sum_{\lambda_j=\lambda}|e_j(x)|^2$ is
well defined: it is the same for every orthonormal basis $\{e_j\}$ of the
$-\lambda^2$ eigenspace. Indeed, if $E_\lambda$ is the orthogonal projection onto
this eigenspace and $\Pi_\lambda(x,y)$ is the Schwartz kernel of $E_\lambda$, then
\[
\Pi_\lambda(x,x)=\sum_{\lambda_j=\lambda}|e_j(x)|^2,
\]
so \eqref{eq:eig_growth_113} can be written in the basis-free form
\[
\Pi_\lambda(x,x)\le A(\lambda)^2 \qquad \text{for all } x\in M.
\]
Also, $\Pi_\lambda(x,x)$ can be viewed as a local density of states: integrating on
$M$ gives the multiplicity,
\[
\tr(E_\lambda)=\int_M \Pi_\lambda(x,x)\,dV_g(x)
=\dim \ker(\Delta_g+\lambda^2).
\]
\end{remark}

\begin{remark}
The quantitative bound in Theorem~\ref{theorem:localFRlowerbound} depends implicitly on the eigenfunction growth
assumption
\[
A(\lambda)\lesssim \lambda^{a}.
\]
In particular, the constants in the estimate deteriorate as the exponent $a$ increases.
Manifolds with improved eigenfunction bounds, therefore, yield correspondingly stronger
quantitative conclusions.
\end{remark}

\begin{remark}[Dependence on eigenfunction growth]
The factor $A_R R^{d/2}$ in the denominator of \eqref{eq:lower_FRR_113} reflects the combined effect of eigenfunction growth and spectral concentration. 
On manifolds where eigenfunctions are uniformly bounded in $L^\infty$ (e.g., flat tori), one has $A_R \lesssim 1$ and therefore $\FR_R(f) \gtrsim (R^{d/2}\sqrt{|E^{C_0/R}|})^{-1}$. 
On spheres, where the zonal harmonics saturate the H\"ormander bound $A(\lambda) \lesssim \lambda^{(d-1)/2}$, we have $A_R \lesssim R^{(d-1)/2}$, yielding 
$\FR_R(f) \gtrsim (R^{d-1/2}\sqrt{|E^{C_0/R}|})^{-1}$. 
The constant $c_0$ depends on $(M,g)$, $\psi$, and the implicit constant in the growth bound for $A(\lambda)$.
\end{remark}

\begin{remark}[Global vs.\ localized Fourier ratios]
The Fourier ratio $\FR(f)$ defined above and its localized counterpart $\FR_R(f)$ measure related but distinct notions of spectral concentration. The quantity $\FR(f)$ is global, involving the full spectrum of $\sqrt{-\Delta_g}$, and it controls approximation by short spectral sums as in Theorem~\ref{theorem:manifoldfourierratio}. In contrast, $\FR_R(f)$ captures spectral concentration at a fixed frequency scale $R$ and is adapted to scale-dependent uncertainty principles, as in Theorems~\ref{theorem:localFRlowerbound} and~\ref{theorem:manifold-FR-sandwich}. No limiting or monotonic relationship between $\FR(f)$ and $\FR_R(f)$ is assumed or required for the results of this paper.
\end{remark}

Our next result is an upper bound on the Fourier ratio. This bound is then combined with Theorem~\ref{theorem:localFRlowerbound} to produce a variant of the classical Fourier uncertainty principle.

\begin{theorem}\label{theorem:manifold-FR-sandwich}
With the notation of Theorem \ref{theorem:localFRlowerbound}, there exist constants $c_0, C_0 > 0$ (depending only on $(M,g)$ and $\psi$) such that the following hold.
\begin{enumerate}
\item[\textup{(i)}]
Let $0 < \eta < 1$ and let $\Sigma_R \subset \spec(\sqrt{-\Delta_g})$ be a subset of spectral parameters. Assume the $L^1$ spectral concentration condition
\[
\sum_{\substack{\lambda:\,|\psi(\lambda/R)| \ne 0 \\ \lambda \notin \Sigma_R}}
    |\psi(\lambda/R)| \|E_\lambda f\|_{L^2(M)}
\le
\eta \sum_{\lambda} |\psi(\lambda/R)| \|E_\lambda f\|_{L^2(M)}.
\]
Let
\[
M_R(\Sigma_R) = \#\left\{ \lambda \in \Sigma_R : |\psi(\lambda/R)| \ne 0 \right\}
\]
denote the number of localized spectral parameters in $\Sigma_R$. Then
\[
\FR_R(f) \le \frac{\sqrt{M_R(\Sigma_R)}}{1-\eta}.
\]

\item[\textup{(ii)}]
Combining Theorem \ref{theorem:localFRlowerbound} and \textup{(i)} yields
\[
M_R(\Sigma_R)\,|E^{C_0/R}| \ge \frac{c_0^2}{A_R^2 R^d} (1-\eta)^2.
\]
\end{enumerate}

In particular, for fixed $\eta \in (0,1)$ and bounded eigenfunction growth on the window (that is, $A_R \lesssim 1$), the product of the spatial volume of the $C_0/R$-thickening of $\supp f$ and the number of active spectral parameters in $\Sigma_R$ cannot be arbitrarily small.
\end{theorem}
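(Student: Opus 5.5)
Part (i) is a Cauchy--Schwarz argument on the spectral side, and part (ii) combines it with the lower bound \eqref{eq:lower_FRR_113} of Theorem~\ref{theorem:localFRlowerbound}.

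\textbf{Part (i).} Write $a_\lambda := |\psi(\lambda/R)|\,\|E_\lambda f\|_{L^2(M)}$, so that
\[
\FR_R(f) = \frac{\sum_\lambda a_\lambda}{\bigl(\sum_\lambda a_\lambda^2\bigr)^{1/2}}.
\]
The concentration hypothesis says that the sum of $a_\lambda$ over $\lambda \notin \Sigma_R$ with $\psi(\lambda/R) \ne 0$ is at most $\eta \sum_\lambda a_\lambda$. Terms with $\psi(\lambda/R) = 0$ vanish anyway. Hence
\[
(1-\eta)\sum_\lambda a_\lambda \le \sum_{\lambda \in \Sigma_R,\ \psi(\lambda/R) \ne 0} a_\lambda .
\]
The right-hand side has at most $M_R(\Sigma_R)$ nonzero terms. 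By Cauchy--Schwarz it is therefore at most
\[
\sqrt{M_R(\Sigma_R)}\,\Bigl(\sum_{\lambda \in \Sigma_R} a_\lambda^2\Bigr)^{1/2} \le \sqrt{M_R(\Sigma_R)}\,\Bigl(\sum_\lambda a_\lambda^2\Bigr)^{1/2}.
\]
Dividing by $(1-\eta)\bigl(\sum_\lambda a_\lambda^2\bigr)^{1/2}$ gives $\FR_R(f) \le \sqrt{M_R(\Sigma_R)}/(1-\eta)$.

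We need the denominator to be nonzero, i.e.\ $P_R f \ne 0$. If $P_R f = 0$, then $\FR_R(f)$ is undefined and the statement is vacuous. Note also that $M_R(\Sigma_R)$ could be infinite, in which case the bound is trivial. For $\psi$ with $\widehat\psi$ compactly supported, $\psi$ need not vanish anywhere, so that case genuinely occurs.

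\textbf{Part (ii).} Work under the hypotheses of Theorem~\ref{theorem:localFRlowerbound}: $f$ supported in $E$ and spectrally localized at scale $R$. Then \eqref{eq:lower_FRR_113} gives
\[
\frac{c_0}{A_R R^{d/2}\sqrt{|E_{C_0/R}|}} \le \FR_R(f) \le \frac{\sqrt{M_R(\Sigma_R)}}{1-\eta}.
\]
Squaring and rearranging yields
\[
M_R(\Sigma_R)\,|E^{C_0/R}| \ge \frac{c_0^2 (1-\eta)^2}{A_R^2 R^d},
\]
with the same $c_0, C_0$ as in that theorem. Here $E^{C_0/R}$ and $E_{C_0/R}$ denote the same thickening. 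The final ``in particular'' remark follows immediately, since $A_R \lesssim 1$ makes the right-hand side a fixed positive multiple of $R^{-d}$.

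\textbf{Main obstacle.} The only delicate point is bookkeeping, not analysis. We must ensure that the hypotheses of Theorem~\ref{theorem:localFRlowerbound}, namely support in $E$ and spectral localization, are imposed on $f$ in (ii), and that $\FR_R(f)$ is well defined. With that in place, both parts are elementary.
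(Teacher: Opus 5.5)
Your proposal is correct and follows the paper's proof essentially verbatim: the concentration hypothesis gives $(1-\eta)\sum_\lambda a_\lambda\le\sum_{\lambda\in\Sigma_R}a_\lambda$, Cauchy--Schwarz over the active parameters in $\Sigma_R$ bounds this by $\sqrt{M_R(\Sigma_R)}\bigl(\sum_\lambda a_\lambda^2\bigr)^{1/2}$, and (ii) follows by combining with \eqref{eq:lower_FRR_113} and squaring. Your observation that the active set need not be finite (so $M_R(\Sigma_R)=\infty$ can occur, and then the bound is trivial) is a fair correction to the paper's phrase ``Cauchy--Schwarz on the finite set,'' but it does not affect the argument.
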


We collect the geometric preliminaries and standard spectral facts used throughout the paper in Appendix~\ref{app:geometry}. We now proceed to the proofs of the main results.

\section{Proofs of the main results}

We prove Theorem~\ref{thm:stability} first (stability), then deduce  Theorem~\ref{main}  for $2<p<2d/k$, and finally handle the endpoint $p=2d/k$ via a dyadic decomposition.

\subsection{Proof of Theorem \ref{thm:stability}}

\begin{proof}
Let $\psi$ be an even Schwartz function with $\widehat\psi$ supported in $[-1,1]$ and $\psi(0)=1$, and let
\[
P_R\mu=\sum_\lambda \psi(\lambda/R)\,E_\lambda \mu.
\]
Since $\psi$ is even and $\widehat\psi$ is compactly supported, we may write
\[
\psi(\lambda)=\frac{1}{2\pi}\int_{\mathbb R}\cos(t\lambda)\,\widehat\psi(t)\,dt,
\]
and hence, in the sense of functional calculus,
\[
P_R=\frac{R}{2\pi}\int_{\mathbb R}\widehat\psi(Rt)\,\cos\bigl(t\sqrt{-\Delta_g}\bigr)\,dt.
\]
By finite propagation speed for the wave equation on $(M,g)$, there exists a constant $C=C(M,g,\psi)>0$ such that if $\supp \mu \subset E$, then
\[
\supp(P_R\mu)\subset E^{C/R}.
\]

By orthogonality and H\"older with exponents  $q=\frac{p}{p-2}$ and $q'=\frac{p}{2}$ for $p>2$ (so $1-2/p>0$) we have 

\begin{align*}
\|P_R\mu\|_{L^2(M)}^2
&=
\sum_\lambda |\psi(\lambda/R)|^2\,\|E_\lambda\mu\|_{L^2(M)}^2 \\
&\le
\left(\sum_\lambda |\psi(\lambda/R)|^{\frac{2}{1-\frac{2}{p}}}\right)^{1-\frac{2}{p}}
\left(\sum_\lambda \|E_\lambda\mu\|_{L^2(M)}^p\right)^{2/p}.
\end{align*}
By the Weyl law \eqref{eq:Weyl-law} and the rapid decay of $\psi$, the first factor is $\lesssim R^{d(1-\frac{2}{p})}$ (using the counting function $N(\Lambda)\lesssim \Lambda^d$ and comparing  the sum to $\int \psi(t)^q dN(Rt)$), hence
\begin{equation}\label{eq:PR-L2-stability}
\|P_R\mu\|_{L^2(M)}\lesssim R^{\frac{d}{2}-\frac{d}{p}}\|\mu\|_{\widehat\ell^p}.
\end{equation}

Now fix $\chi\in C^\infty(M)$. Using $\|\chi\|_{L^2(\supp P_R\mu)}\le \|\chi\|_\infty\,|\supp(P_R\mu)|^{1/2}$, we obtain
\[
|\langle P_R\mu,\chi\rangle|
\le
\|P_R\mu\|_{L^2(M)}\,\|\chi\|_{L^2(\supp P_R\mu)}
\le
\|P_R\mu\|_2\,\|\chi\|_\infty\,|\supp(P_R\mu)|^{1/2}.
\]
By \eqref{eq:neigh-vol} and $\supp(P_R\mu)\subset E^{C/R}$, we have
\[
|\supp(P_R\mu)|\le |E^{C/R}|\le C_E\,(C/R)^{d-k}\lesssim R^{k-d}.
\]
Combining this with \eqref{eq:PR-L2-stability} yields
\[
|\langle P_R\mu,\chi\rangle|
\lesssim
\|\mu\|_{\widehat\ell^p}\,\|\chi\|_\infty\,R^{\frac{d}{2}-\frac{d}{p}}\cdot R^{\frac{k-d}{2}}
=
\|\mu\|_{\widehat\ell^p}\,\|\chi\|_\infty\,R^{\frac{k}{2}-\frac{d}{p}}.
\]
This is exactly \eqref{eq:stability-pairing}.
\end{proof}

\subsection{Proof of Theorem \ref{main}}

We first prove Theorem \ref{main} in the range $2<p<\frac{2d}{k}$ under the neighborhood volume hypothesis \eqref{eq:neigh-vol}. The endpoint argument is handled separately afterward.

\begin{proposition}\label{prop:main-nonendpoint}
Let $u$ be a (complex, Radon) measure on $M$ supported in a compact set $E\subset M$ satisfying \eqref{eq:neigh-vol} for some $k<d$, and suppose
\[
\sum_\lambda \|E_\lambda u\|_{L^2(M)}^p<\infty
\]
for some $p$ with $2<p<\frac{2d}{k}$. Then $u=0$.
\end{proposition}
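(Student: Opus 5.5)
The plan is to deduce Proposition~\ref{prop:main-nonendpoint} directly from Theorem~\ref{thm:stability} by letting $R\to\infty$. Fix $p$ with $2<p<\frac{2d}{k}$. Then the exponent $\frac{k}{2}-\frac{d}{p}$ in \eqref{eq:stability-pairing} is strictly negative, since $p<\frac{2d}{k}$ is equivalent to $\frac{d}{p}>\frac{k}{2}$. By hypothesis $\|u\|_{\widehat\ell^p}<\infty$, and $E$ satisfies \eqref{eq:neigh-vol}, so Theorem~\ref{thm:stability} applies with $\mu=u$. For every $\chi\in C^\infty(M)$ it gives
\[
|\langle P_Ru,\chi\rangle|\le C\,\|u\|_{\widehat\ell^p}\,\|\chi\|_{L^\infty(M)}\,R^{\frac{k}{2}-\frac{d}{p}}\longrightarrow 0 \qquad (R\to\infty).
\]

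The second step is to show that $\langle P_Ru,\chi\rangle\to\langle u,\chi\rangle$ as $R\to\infty$ for each smooth $\chi$. Since $P_R$ is self-adjoint up to conjugation ($\psi$ is real, being even with real-valued, even $\widehat\psi$; if necessary I will choose $\psi$ real), we have
\[
\langle P_Ru,\chi\rangle=\sum_\lambda \psi(\lambda/R)\langle E_\lambda u,\chi\rangle=\langle u,P_R\chi\rangle .
\]
So it suffices to show $P_R\chi\to\chi$ uniformly on $M$, because $u$ is a finite Radon measure and hence pairs continuously with $C(M)$. For smooth $\chi$, the coefficients $\|E_\lambda\chi\|_{L^2}$ decay faster than any power of $\lambda$. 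Combined with the crude sup-norm bound $\|E_\lambda\chi\|_{L^\infty}\lesssim \lambda^{(d-1)/2}\|E_\lambda\chi\|_{L^2}$ (Hörmander), this makes $\sum_\lambda \|E_\lambda \chi\|_{L^\infty}<\infty$. Since $\psi(\lambda/R)\to\psi(0)=1$ pointwise and $|\psi|\le C$, dominated convergence gives
\[
\|P_R\chi-\chi\|_{L^\infty}\le \sum_\lambda |\psi(\lambda/R)-1|\,\|E_\lambda\chi\|_{L^\infty}\to 0 .
\]

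Combining the two steps gives $\langle u,\chi\rangle=0$ for every $\chi\in C^\infty(M)$. Since smooth functions are dense in $C(M)$, and $u$ is a Radon measure, it follows that $u=0$.

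The only delicate point is the justification in the second step: interpreting the pairing $\langle P_Ru,\chi\rangle$ for a measure $u$, and moving $P_R$ onto the test function. I will handle this by working with the spectral expansion as above. The interchange of the sum and the pairing is legitimate because $\langle u,e_j\rangle$ grows at most polynomially, being bounded by $\|u\|\,\|e_j\|_\infty$. At the same time, $\psi(\lambda/R)$ and the coefficients of $\chi$ decay rapidly, so all series converge absolutely. Everything else is a direct consequence of Theorem~\ref{thm:stability}.
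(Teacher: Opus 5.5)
Your proposal is correct and follows the paper's proof: apply Theorem~\ref{thm:stability}, observe that $\frac{k}{2}-\frac{d}{p}<0$, and let $R\to\infty$ using $P_Ru\to u$ weakly. The only difference is that you justify this weak convergence by moving $P_R$ onto $\chi$ and showing $P_R\chi\to\chi$ uniformly, whereas the paper just asserts it from $\psi(0)=1$. If $\psi$ is not real, you can instead move $\overline{\psi}(\sqrt{-\Delta_g}/R)$ onto $\chi$, since $\overline{\psi}(0)=1$ as well.
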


\begin{proof}
Fix $\psi$ as in Theorem \ref{thm:stability}, and define $P_R=\psi(\sqrt{-\Delta_g}/R)$. For any test function $\chi\in C^\infty(M)$, Theorem \ref{thm:stability} gives
\[
|\langle P_Ru,\chi\rangle|
\le C\,\|u\|_{\widehat\ell^p}\,\|\chi\|_{L^\infty(M)}\,R^{\frac{k}{2}-\frac{d}{p}}.
\]
Since $p<\frac{2d}{k}$, we have $\frac{k}{2}-\frac{d}{p}<0$, so the right-hand side tends to $0$ as $R\to\infty$.

On the other hand, since $\psi(0)=1$ and $\psi$ is Schwartz, $P_Ru\to u$ weakly as $R\to\infty$. Therefore $\langle u,\chi\rangle=\lim_{R\to\infty}\langle P_Ru,\chi\rangle=0$ for every smooth $\chi$, hence $u=0$.
\end{proof}

\begin{proposition}\label{prop:main-endpoint}
Let $u$ be a (complex, Radon) measure on $M$ supported in a set satisfying \eqref{eq:neigh-vol} for some $k < d$, and suppose
\[
    \sum_\lambda \|E_\lambda u\|_{L^2(M)}^p < \infty
\]
for some $p$ with $2 < p \leq \frac{2d}{k}$. Then, $u = 0$.
\end{proposition}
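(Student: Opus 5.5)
The case $2<p<\frac{2d}{k}$ is Proposition~\ref{prop:main-nonendpoint}, so only $p=\frac{2d}{k}$ needs proof. There the exponent $\frac{k}{2}-\frac{d}{p}$ in Theorem~\ref{thm:stability} is $0$, so that bound gives only uniform boundedness of $\langle P_Ru,\chi\rangle$. The fix is to spend only the tail of the spectral sum: the hypothesis $\sum_\lambda\|E_\lambda u\|^p<\infty$ makes the tail $\epsilon_N:=\bigl(\sum_{\lambda>N}\|E_\lambda u\|_{L^2}^p\bigr)^{1/p}$ tend to $0$. Low frequencies are then handled separately, since their contribution after dividing by $R^{d/2-d/p}$ vanishes as $R\to\infty$.

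\textbf{Step 1: the splitting.} Fix $\chi\in C^\infty(M)$ and $\psi$ as in Theorem~\ref{thm:stability}. Write $P_Ru$ as before, with $\supp P_Ru\subset E^{C/R}$ by finite propagation speed, so
\[
|\langle P_Ru,\chi\rangle|\le \|\chi\|_\infty |E^{C/R}|^{1/2}\|P_Ru\|_{L^2}\lesssim \|\chi\|_\infty R^{\frac{k-d}{2}}\|P_Ru\|_{L^2}.
\]
Split $\|P_Ru\|_2^2=\sum_{\lambda\le N}+\sum_{\lambda>N}$ of $|\psi(\lambda/R)|^2\|E_\lambda u\|_2^2$.

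\textbf{Step 2: the two pieces.} For the high part, apply the H\"older/Weyl argument of \eqref{eq:PR-L2-stability} restricted to $\lambda>N$. This gives a bound $\lesssim R^{d-2d/p}\epsilon_N^2$, which equals $R^{d-k}\epsilon_N^2$ at $p=2d/k$. For the low part, the sum is at most $\sum_{\lambda\le N}\|E_\lambda u\|_2^2=:B_N<\infty$. Each $E_\lambda u$ is a finite-dimensional projection of a finite measure, so $B_N$ is finite, and it is independent of $R$. Hence
\[
|\langle P_Ru,\chi\rangle|\lesssim \|\chi\|_\infty\bigl(\epsilon_N+R^{\frac{k-d}{2}}B_N^{1/2}\bigr).
\]
The implied constant is uniform in $N$ and $R$, since it comes only from $C_E$, the Weyl law, and $\psi$.

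\textbf{Step 3: the limits.} Let $R\to\infty$ with $N$ fixed. Since $k<d$, the second term vanishes. Because $P_Ru\to u$ weakly, as in Proposition~\ref{prop:main-nonendpoint}, we get $|\langle u,\chi\rangle|\lesssim\|\chi\|_\infty\epsilon_N$. Now let $N\to\infty$, which gives $\langle u,\chi\rangle=0$ for every smooth $\chi$, and therefore $u=0$.

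The main thing to check is uniformity in Step 2. The Weyl-law comparison $\sum_\lambda|\psi(\lambda/R)|^{q}\lesssim R^d$ must not depend on $N$. This holds trivially, because restricting the sum to $\lambda>N$ only decreases it. So the argument reduces to the dyadic/tail bookkeeping, which I expect to be routine. The paper's phrase "dyadic decomposition" suggests the authors split in $R$-dyadic shells instead, but the tail splitting above seems to suffice.
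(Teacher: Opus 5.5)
Your proof is correct. It reaches the endpoint with a different and somewhat lighter bookkeeping than the paper's.

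\textbf{The paper's route.} The paper splits the spectrum into shells $2^jR<\lambda\le 2^{j+1}R$ scaled to $R$. On each shell it applies H\"older and the Weyl count; at $p_0=\frac{2d}{k}$ the count exactly cancels the factor $(2^jR)^{k-d}$. This gives
\[
R^{k-d}\|P_Ru\|_{L^2}^2\le R^{k-d}|\psi(0)|^2\left|\int_M u\right|^2+\sum_{j\in\mathbb Z}a_j\,b_j(R).
\]
Here $a_j=\sup_{(2^j,2^{j+1}]}2^{j(d-k)}|\psi|^2$ is summable over all $j\in\mathbb Z$, and $b_j(R)$ is bounded by the squared $\ell^{p_0}$ mass of the shell. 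The paper then concludes by dominated convergence in $j$, since each fixed shell escapes to infinity as $R\to\infty$.

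\textbf{Your route.} You cut at a fixed absolute frequency $N$ and reuse the global estimate \eqref{eq:PR-L2-stability} on the tail $\lambda>N$. You dispose of the finitely many low modes with the spare factor $R^{(k-d)/2}$, and then take the iterated limit, first $R\to\infty$ and then $N\to\infty$.

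\textbf{Comparison.} Both arguments use the same inputs: finite propagation speed, the neighborhood bound \eqref{eq:neigh-vol}, Weyl's law, and the vanishing of the $\ell^{p_0}$ tail. Both use $k<d$ to handle low frequencies. The paper does this through the weight $2^{j(d-k)}$ on shells with $j<0$; you do it through the term $R^{(k-d)/2}B_N^{1/2}$.
\begin{itemize}
\item Your version is shorter. It needs no shell-by-shell summability check and no dominated convergence over $j$. It also yields an explicit bound $\|\chi\|_\infty\inf_N\bigl(\epsilon_N+R^{(k-d)/2}B_N^{1/2}\bigr)$.
\item The paper's dyadic version is scale-invariant. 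It shows that each band $\lambda\sim 2^jR$ contributes in proportion to its own $\ell^{p_0}$ mass. That is the more natural form if one later wants frequency-localized refinements.
\end{itemize}

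\textbf{One cosmetic point.} In the low-frequency part you implicitly used $|\psi|\le 1$. In general this costs a factor $\|\psi\|_\infty^2$, which is harmless. Alternatively, you can choose $\widehat\psi\ge 0$, so that $|\psi|\le\psi(0)=1$.
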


This proposition, together with Proposition~\ref{prop:main-nonendpoint}, implies Theorem~\ref{main} as stated for the range $2\le p\le \frac{2d}{k}$.

\begin{proof}
Let $p_0 = \frac{2d}{k}$ be the endpoint. Let $\psi$ be as in Theorem \ref{thm:stability}, and define
\[
P_R u=\sum_\lambda \psi(\lambda/R)\,E_\lambda u.
\]
As in the proof of Theorem \ref{thm:stability}, $\supp(P_Ru)\subset E^{C/R}$ for some $C=C(M,g,\psi)>0$, and $P_Ru\to u$ weakly as $R\to\infty$.

We perform a dyadic decomposition:
\begin{align*}
\|P_R u\|_{L^2(M)}^{2}
&= \sum_\lambda |\psi(\lambda/R)|^2 \|E_\lambda u\|_{L^2(M)}^2 \\
&= |\psi(0)|^2 \left| \int_M u \right|^2
+ \sum_{j \in \mathbb Z}\,  \sum_{2^j < \lambda/R \leq 2^{j+1}} |\psi(\lambda/R)|^2 \|E_\lambda u\|_{L^2(M)}^2,
\end{align*}
where $\lambda$ is summed over the spectrum of $\sqrt{-\Delta_g}$, and where we have taken the $\lambda = 0$ terms out explicitly. Since the spectrum of $\sqrt{-\Delta_g}$ is discrete, the sum over $j$ is supported on a set of integers bounded below. For the dyadic part, write
\begin{align*}
\sum_{2^j < \lambda/R \leq 2^{j+1}}&(2^{j}R)^{d-k} |\psi(\lambda/R)|^2 (2^jR)^{k-d} \|E_\lambda u\|_{L^2(M)}^2 \\
&\leq \left( \sup_{\tau \in (2^j, 2^{j+1}]} (2^{j}R)^{d-k} |\psi(\tau)|^2 \right) \sum_{2^j < \lambda/R \leq 2^{j+1}} (2^jR)^{k-d} \|E_\lambda u\|_{L^2(M)}^2 \\
&= R^k a_j b_j(R),
\end{align*}
where
\[
a_j = \sup_{\tau \in (2^j, 2^{j+1}]} 2^{j(d-k)} |\psi(\tau)|^2
\qquad \text{and} \qquad
b_j(R) = \sum_{2^j < \lambda/R \leq 2^{j+1}} (2^jR)^{k-d} \|E_\lambda u\|_{L^2(M)}^2.
\]
By H\"older,
\begin{align*}
b_j(R)
&\leq (2^j R)^{k-d} \left( \sum_{R2^j < \lambda \leq R2^{j+1}} 1 \right)^{1 - \frac 2 {p_0}}
\left( \sum_{2^j <\lambda/R \leq 2^{j+1}} \|E_\lambda u\|_{L^2(M)}^{p_0} \right)^\frac 2 {p_0} \\
&= (2^j R)^{k-d} (N(2^{j+1}R) - N(2^j R))^{1 - \frac 2 {p_0}}
\left( \sum_{2^j < \lambda/R \leq 2^{j+1}} \|E_\lambda u\|_{L^2(M)}^{p_0} \right)^\frac 2 {p_0},
\end{align*}
where $N(\lambda)$ is the Weyl counting function. By the Weyl law \eqref{eq:Weyl-law}, $N(2^{j+1} R) \lesssim (2^j R)^d$, hence
\[
b_j(R) \lesssim (2^j R)^{d \left( 1 - \frac 2 {p_0} \right) - (d - k)}
\left( \sum_{2^j < \lambda/R \leq 2^{j+1}} \|E_\lambda u\|_{L^2(M)}^{p_0} \right)^\frac 2 {p_0}.
\]
Since $p_0 = \frac{2d}{k}$, we have $d(1 - 2/p_0) - (d - k) = 0$, so
\[
b_j(R) \lesssim
\left( \sum_{2^j < \lambda/R \leq 2^{j+1}} \|E_\lambda u\|_{L^2(M)}^{p_0} \right)^\frac 2 {p_0}.
\]
Since $\sum_\lambda \|E_\lambda u\|_{L^2(M)}^{p_0}<\infty$, it follows that $b_j(R)\to 0$ as $R\to\infty$ for each fixed $j$, and $|b_j(R)|$ are uniformly bounded in both $R$ and $j$. Since $\psi$ is Schwartz, $a_j$ decays rapidly in $j$ and $\sum_{j\in\mathbb Z} a_j<\infty$. By dominated convergence, and since $d-k>0$,
\[
R^{k-d}\|P_Ru\|_{L^2(M)}^2
=
R^{k-d}|\psi(0)|^2\left|\int_M u\right|^2
+\sum_j a_j b_j(R) \to 0.
\]
If $\chi$ is any smooth test function on $M$,
\[
|\langle P_Ru,\chi\rangle|^2
\le \|P_Ru\|_{L^2(M)}^2 \, |\supp(P_Ru)|
\lesssim \|P_Ru\|_{L^2(M)}^2 \, R^{k-d}\to 0,
\]
where we used $|\supp(P_Ru)|\le |E^{C/R}|\lesssim R^{k-d}$ from \eqref{eq:neigh-vol}. Finally, since $P_Ru\to u$ weakly as $R\to\infty$, we obtain $\langle u,\chi\rangle=0$ for all smooth $\chi$, hence $u=0$.
\end{proof}

Here we handle the case p=2: If $\sum_\lambda \|E_\lambda u\|_2^2<\infty$, then the spectral coefficients are in $\ell^2$. Therefore, $u$ defines an $L^2$ function through an eigenbasis. Since any $L^2$ function supported in a submanifold of dimension $k<d$ must be zero, we get $u=0$,  since the sub-manifold has $d$-volume zero. This completes the proof of the case $p=2$.

\subsection{Proof of Proposition \ref{prop:sphere-lack-sharpness}}
\begin{proof}
Let $u = f \delta_S$, where $f$ is a smooth function and $\delta_S$ is the induced volume measure on $S$. Then,
\[
\|E_\lambda u\|_{L^2}^2 = \sum_{\lambda_j = \lambda} \left| \int_S f \overline{e_j} \right|^2,
\]
where $\{ e_j : j \in \mathbb N \}$ is an orthonormal basis of Laplace--Beltrami eigenfunctions with
\[
\Delta_g e_j = -\lambda_j^2 e_j, \qquad \lambda_j \geq 0.
\]
A very general result by Zelditch \cite{steve}, yields asymptotics
\begin{equation}\label{eq:steve}
\sum_{\lambda_j \leq \lambda} \left| \int_S f \overline{e_j} \right|^2 = C \lambda^{d - k} + O(\lambda^{d - k - 1}),
\end{equation}
where the leading constant $C > 0$ is determined by $S$, $f$, and $M$.

The preceding setup applies to general manifolds; now we specialize to the sphere $S^d$. The spectrum of the Laplace--Beltrami operator, or more precisely $\sqrt{-\Delta_g}$, on the sphere is
\[
\left\{\sqrt{n(n+d-1)} : n \in \mathbb Z, \ n \geq 0 \right\}.
\]
Fix $T$ sufficiently large, and note that, for sufficiently large $\lambda$,
\begin{align*}
\sum_{\lambda_j \in (\lambda, \lambda + T]} \left| \int_S f \overline e_j \right|^2
&= C((\lambda + T)^{d-k} - \lambda^{d-k}) + O(\lambda^{d-k-1}) \\
&= C T \lambda^{d-k-1} + O(\lambda^{d-k-1}) \\
&\gtrsim \lambda^{d-k-1}.
\end{align*}
There are at most $T$ points in the spectrum of $\sqrt{-\Delta_g}$ lying in the interval $(nT, (n+1)T]$, and hence by pigeonholing, there exists $\lambda' \in (nT, (n+1)T]$ for which
\[
\|E_{\lambda'} u\|_{L^2}^2 = \sum_{\lambda_j = \lambda'} \left| \int_S f \overline e_j \right|^2 \gtrsim \frac 1 T \lambda^{d-k-1}.
\]
This readily yields the first claim of the proposition. The second claim follows after noting
\[
\sum_{\lambda_j \leq \lambda} \left| \int_S f \overline e_j \right|^2
= \sum_{\lambda' \le \lambda}\|E_{\lambda'}u\|_{L^2}^2
\le \left(\sup_{\lambda' \leq \lambda} \|E_{\lambda'} u\|_{L^2}^2\right)\#\{\lambda' \in \spec(\sqrt{-\Delta_g}) : \lambda' \le \lambda\},
\]
and on the sphere $\#\{\lambda' \in \spec(\sqrt{-\Delta_g}) : \lambda' \le \lambda\}\simeq \lambda$ (distinct spectral parameters grow linearly), giving
\[
\sum_{\lambda_j \leq \lambda} \left| \int_S f \overline e_j \right|^2
\lesssim \left( \sup_{\lambda'} \|E_{\lambda'} u\|_{L^2}^2 \right) \lambda,
\qquad \lambda \gg 1.
\]
Comparing with \eqref{eq:steve} forces $d-k\le 1$, i.e.\ $k\ge d-1$, hence $S$ is a hypersurface. This proves necessity; sufficiency follows by noting, as a consequence of \eqref{eq:steve} in the case $d - k = 1$,
\[
    \|E_\lambda u\|_{L^2}^2 \leq \sum_{\lambda_j \in (\lambda - 1, \lambda]} \left| \int_S f \overline e_j \right|^2 = O(1).
\]
\end{proof}

\subsection{Proof of Theorem \ref{theorem:manifoldfourierratio}}
\begin{proof}
Consider a random eigenfunction
\[
Z(x) = \frac{\|f\|_{\widehat \ell^1}}{\|E_\lambda f\|_{L^2}} E_\lambda f(x)
\]
where $\lambda$ is drawn from the spectrum of $\sqrt{-\Delta_g}$ randomly with probability $\|E_\lambda f\|_{L^2}/\|f\|_{\widehat \ell^1}$. Note
\[
\E Z(x) = f(x)
\]
and
\begin{align*}
\E |Z(x)|^2
&= \sum_{\lambda} \frac{\|E_\lambda f\|_2}{\|f\|_{\widehat \ell^1}} \left| \frac{\|f\|_{\widehat \ell^1}}{\|E_\lambda f\|_{L^2}} E_\lambda f(x) \right|^2 \\
&= \sum_\lambda \frac{\|f\|_{\widehat \ell^1}}{\|E_\lambda f\|_2} |E_\lambda f(x)|^2.
\end{align*}
Hence,
\[
\operatorname{Var}(Z(x)) = \E|Z(x)|^2 - |\E Z(x)|^2 = \sum_\lambda \frac{\|f\|_{\widehat \ell^1}}{\|E_\lambda f\|_2} |E_\lambda f(x)|^2 - |f(x)|^2.
\]

Consider $Z_1, \ldots, Z_k$ i.i.d.\ random variables with the same distribution as $Z$, and set
\[
P = \frac 1 k \sum_{i = 1}^k Z_i.
\]
Note $\E P(x) = f(x)$ for each $x \in M$, and furthermore,
\begin{align*}
\E \|P - f\|_2^2
&= \int_M \E |P(x) - f(x)|^2 \, dV_g(x) \\
&= \int_M \operatorname{Var} (P(x)) \, dV_g(x) \\
&= \frac 1 k \int_M \operatorname{Var}(Z(x)) \, dV_g(x) \\
&= \frac 1 k \int_M \left( \sum_\lambda \frac{\|f\|_{\widehat \ell^1}}{\|E_\lambda f\|_2} |E_\lambda f(x)|^2 - |f(x)|^2 \right) \, dV_g(x) \\
&= \frac 1 k \left( \|f\|_{\widehat \ell^1} \sum_\lambda \|E_\lambda f\|_2 - \|f\|_2^2 \right) \\
&= \frac 1 k ( \|f\|_{\widehat \ell^1}^2 - \|f\|_2^2) \\
&= \frac 1 k \|f\|_2^2 (\FR(f)^2 - 1).
\end{align*}
Hence, there exists a deterministic linear combination $P$ for which
\[
\|P - f\|_2^2 \leq \frac 1 k \|f\|_2^2 \cdot (\FR(f)^2 - 1).
\]
With $k$ as in the statement of the theorem, we have
\[
\|P - f\|_2 < \eta \|f\|_2
\]
as needed.
\end{proof}

\subsection{Proof of Theorem \ref{thm:FR-converse-bandlimited}}

\begin{proof}
Write $r=f-P$. Since $f$ is $\Lambda$-band-limited and $P$ is a finite linear combination of eigenfunctions, both $P$ and $r$ are also $\Lambda$-band-limited (after replacing $P$ by its projection onto the span of $\{\lambda_j\le \Lambda\}$, which does not increase the error).

First, by Cauchy--Schwarz on the $k$ orthonormal eigenfunctions entering $P$, we have
\begin{equation*}
\|P\|_{\widehat\ell^1}
=
\sum_\lambda \|E_\lambda P\|_{2}
\le
\sqrt{k}\left(\sum_\lambda \|E_\lambda P\|_2^2\right)^{1/2}
=
\sqrt{k}\,\|P\|_2.
\end{equation*}
Next, since $r$ is $\Lambda$-band-limited, the set of indices $\{j:\lambda_j\le \Lambda\}$ has size $N(\Lambda)$ (counted with multiplicity), and we obtain
\begin{equation*}
\|r\|_{\widehat\ell^1}
=
\sum_{\lambda\le \Lambda} \|E_\lambda r\|_2
\le
\sqrt{N(\Lambda)}\left(\sum_{\lambda\le \Lambda} \|E_\lambda r\|_2^2\right)^{1/2}
=
\sqrt{N(\Lambda)}\,\|r\|_2.
\end{equation*}
Therefore,
\[
\|f\|_{\widehat\ell^1}
\le
\|P\|_{\widehat\ell^1}+\|r\|_{\widehat\ell^1}
\le
\sqrt{k}\,\|P\|_2+\sqrt{N(\Lambda)}\,\|r\|_2.
\]
Using $\|r\|_2\le \eta \|f\|_2$ and $\|P\|_2\le \|f\|_2+\|r\|_2\le (1+\eta)\|f\|_2$, we obtain
\[
\|f\|_{\widehat\ell^1}
\le
\left((1+\eta)\sqrt{k}+\eta\sqrt{N(\Lambda)}\right)\|f\|_2,
\]
which is exactly \eqref{eq:FR-converse-bound}.
\end{proof}

\vskip.25in

\subsection{Proof of Theorem \ref{theorem:localFRlowerbound}}

\begin{proof}
Since $\psi$ is even and $\widehat{\psi}$ is compactly supported, the functional calculus gives the representation
\[
P_R=\psi\left(\frac{\sqrt{-\Delta_g}}{R}\right)=\frac{R}{2\pi}\int_{\mathbb{R}}\widehat{\psi}(Rt)\cos\left(t\sqrt{-\Delta_g}\right)\,dt.
\]
Finite propagation speed for the wave equation on $(M,g)$ implies that there exists a constant $C_0>0$, depending only on $(M,g)$ and $\operatorname{supp}\widehat{\psi}$, such that if $\operatorname{supp}f\subset E$, then $\operatorname{supp}(\cos(t\sqrt{-\Delta_g})f)\subset E^{C_0|t|}$ for every $t$. Since $\widehat{\psi}(Rt)$ vanishes unless $|t|\le T_0/R$ for a constant $T_0$ depending only on $\psi$, the integral representation yields
\[
\operatorname{supp}(P_R f)\subset E^{C_0/R}.
\]

We now estimate $\|P_R f\|_{L^\infty(M)}$ in terms of the spectral pieces of $f$. Writing the spectral decomposition,
\[
P_R f(x)=\sum_{\lambda}\psi(\lambda/R)E_\lambda f(x),
\]
and expanding $E_\lambda f$ in an orthonormal eigenbasis $\{e_j\}$ of the $\lambda$--eigenspace, we have
\[
E_\lambda f(x)=\sum_{\lambda_j=\lambda}\langle f,e_j\rangle_{L^2(M)}e_j(x).
\]
Cauchy--Schwarz gives
\[
|E_\lambda f(x)|
\le
\left(\sum_{\lambda_j=\lambda}|\langle f,e_j\rangle_{L^2(M)}|^2\right)^{1/2}
\left(\sum_{\lambda_j=\lambda}|e_j(x)|^2\right)^{1/2}.
\]
By orthogonality of the eigenspaces, the first factor equals $\|E_\lambda f\|_{L^2(M)}$, and by hypothesis \eqref{eq:eig_growth_113} the second factor is bounded by $A(\lambda)$. Therefore
\[
|E_\lambda f(x)|\le A(\lambda)\|E_\lambda f\|_{L^2(M)}.
\]
Substituting into the spectral expansion for $P_R f$ yields
\[
|P_R f(x)|\le \sum_{\lambda}|\psi(\lambda/R)|\,A(\lambda)\,\|E_\lambda f\|_{L^2(M)}.
\]
By the band-limitation hypothesis $E_\lambda f=0$ when $|\lambda|>C_\psi R$, and on the remaining window $A(\lambda)\le A_R$ by definition of $A_R$. Hence
\[
|P_R f(x)|\le A_R\sum_{\lambda}|\psi(\lambda/R)|\,\|E_\lambda f\|_{L^2(M)}.
\]
Set
\[
\operatorname{Num}_R(f):=\sum_{\lambda}|\psi(\lambda/R)|\,\|E_\lambda f\|_{L^2(M)}.
\]
Then
\[
\|P_R f\|_{L^\infty(M)}\le A_R \cdot \operatorname{Num}_R .
\]

We next record the scale-dependent estimate for $\operatorname{Num}_R(f)$ coming from Weyl's law. By Cauchy--Schwarz,
\[
\operatorname{Num}_R(f) \le \left(\sum_{\lambda}|\psi(\lambda/R)|^2\right)^{1/2}\left(\sum_{\lambda}\|E_\lambda f\|_{L^2(M)}^2\right)^{1/2}.
\]
The second factor equals $\|f\|_{L^2(M)}$ by orthogonality of the spectral projections. For the first factor, since $\psi$ is Schwartz there exists a constant $C_1>0$, depending only on $\psi$, such that $|\psi(t)|\le 1$ for all $t$ and this implies
\[
\sum_{\lambda}|\psi(\lambda/R)|^2
\le
\#\{\lambda\in\spec(\sqrt{-\Delta_g}):|\lambda|\le C_1 R\}.
\]
By the Weyl bound, there exists a constant $C_2>0$, depending only on $(M,g)$, such that
\[
\#\{\lambda\in\spec(\sqrt{-\Delta_g}):|\lambda|\le T\}\le C_2 T^d
\]
for all $T\ge 1$. Applying this with $T=C_1 R$ gives
\[
\sum_{\lambda}|\psi(\lambda/R)|^2\le C_2(C_1 R)^d.
\]
Consequently there exists a constant $C_3>0$, depending only on $(M,g)$ and $\psi$, such that
\[
\left(\sum_{\lambda}|\psi(\lambda/R)|^2\right)^{1/2}\le C_3 R^{d/2},
\]
and therefore
\[
\operatorname{Num}_R(f) \le C_3 R^{d/2}\|f\|_{L^2(M)}.
\]

We now use the band-limitation hypothesis together with the lower bound on $|\psi|$ on the active window to compare $\|f\|_{L^2(M)}$ and $\operatorname{Num}_R$ from below. Since $E_\lambda f=0$ for $|\lambda|>C_\psi R$ and $|\psi(t)|\ge c_\psi$ for $|t|\le C_\psi$, we have
\[
\operatorname{Num}_R(f)=\sum_{|\lambda|\le C_\psi R}\left|\psi\left(\frac{\lambda}{R}\right)\right|\,\|E_\lambda f\|_{L^2(M)}
\ge
c_\psi\sum_{|\lambda|\le C_\psi R}\|E_\lambda f\|_{L^2(M)}.
\]
Since the $\ell^1$ norm dominates the $\ell^2$ norm,
\[
\sum_{|\lambda|\le C_\psi R}\|E_\lambda f\|_{L^2(M)}
\ge
\left(\sum_{|\lambda|\le C_\psi R}\|E_\lambda f\|_{L^2(M)}^2\right)^{1/2}.
\]
By orthogonality of the projections and band-limitation, the right-hand side equals $\|f\|_{L^2(M)}$, hence
\[
\operatorname{Num}_R(f) \ge c_\psi\|f\|_{L^2(M)},
\qquad
\|f\|_{L^2(M)}\le  c_\psi^{-1}\operatorname{Num}_R.
(f) \]
Combining this with $\operatorname{Num}_R\le C_3 R^{d/2}\|f\|_{L^2(M)}$ and $\|P_R f\|_{L^\infty(M)}\le A_R \operatorname{Num}_R(f)$ gives
\[
\|P_R f\|_{L^\infty(M)}\le C_3 A_R R^{d/2}\operatorname{Num}_R(f).
\]

Finally, we use the support property of $P_R f$ to relate $\|P_R f\|_{L^2(M)}$ and $\|P_R f\|_{L^\infty(M)}$. Since $\operatorname{supp}(P_R f)\subset E^{C_0/R}$, we have
\[
\|P_R f\|_{L^2(M)}^2
=
\int_{E^{C_0/R}}|P_R f(x)|^2\,dV_g(x)
\le
|E^{C_0/R}|\|P_R f\|_{L^\infty(M)}^2,
\]
and therefore
\[
\|P_R f\|_{L^2(M)}
\le
\sqrt{|E^{C_0/R}|}\,\|P_R f\|_{L^\infty(M)}
\le
C_3 A_R R^{d/2}\sqrt{|E^{C_0/R}|}\,\operatorname{Num}_R
(f)\]
On the other hand, the orthogonality of the spectral projections gives
\[
\|P_R f\|_{L^2(M)}^2=\sum_{\lambda}|\psi(\lambda/R)|^2\|E_\lambda f\|_{L^2(M)}^2,
\]
so the denominator in the definition of $\FR_R(f)$ is exactly $\|P_R f\|_{L^2(M)}$. Dividing the previous inequality by $\|P_R f\|_{L^2(M)}$ yields
\[
\FR_R(f)=\frac{\operatorname{Num}_R(f)}{\|P_R f\|_{L^2(M)}}\ge \frac{1}{C_3 A_R R^{d/2}\sqrt{|E^{C_0/R}|}}.
\]
Setting $c_0=1/C_3$ completes the proof.
\end{proof}

\subsection{Proof of Theorem \ref{theorem:manifold-FR-sandwich}}
\begin{proof}
As before, let
\[
\operatorname{Num}_R(f) = \sum_{\lambda} \left|\psi\left(\frac{\lambda}{R}\right)\right| \|E_\lambda f\|_{L^2(M)},
\qquad
D_R(f) = \left( \sum_{\lambda} \left|\psi\left(\frac{\lambda}{R}\right)\right|^2 \|E_\lambda f\|_{L^2(M)}^2 \right)^{1/2},
\]
so that
\[
\FR_R(f) = \frac{\operatorname{Num}_R(f)}{D_R(f)}.
\]

To prove (i), assume the $L^1$ spectral concentration hypothesis. This means that there exists $\eta \in (0,1)$ such that
\[
\sum_{\substack{\lambda : \left|\psi\left(\frac{\lambda}{R}\right)\right| \ne 0 \\ \lambda \notin \Sigma_R}}
\left|\psi\left(\frac{\lambda}{R}\right)\right| \|E_\lambda f\|_{L^2(M)}
\le
\eta \sum_{\lambda} \left|\psi\left(\frac{\lambda}{R}\right)\right| \|E_\lambda f\|_{L^2(M)}.
\]
Equivalently,
\[
\sum_{\lambda \in \Sigma_R} \left|\psi\left(\frac{\lambda}{R}\right)\right| \|E_\lambda f\|_{L^2(M)} \ge (1 - \eta) \operatorname{Num}_R(f).
\]

On the other hand, by Cauchy--Schwarz on the finite set
\[
\left\{\lambda \in \Sigma_R : \left|\psi\left(\frac{\lambda}{R}\right)\right| \ne 0 \right\},
\]
we have
\[
\sum_{\lambda \in \Sigma_R} \left|\psi\left(\frac{\lambda}{R}\right)\right| \|E_\lambda f\|_{L^2(M)}
\le
\left( \sum_{\lambda \in \Sigma_R} 1 \right)^{1/2}
\left( \sum_{\lambda \in \Sigma_R} \left|\psi\left(\frac{\lambda}{R}\right)\right|^2 \|E_\lambda f\|_{L^2(M)}^2 \right)^{1/2}.
\]
By definition,
\[
\sum_{\lambda \in \Sigma_R} 1 = M_R(\Sigma_R),
\]
and
\[
\sum_{\lambda \in \Sigma_R} \left|\psi\left(\frac{\lambda}{R}\right)\right|^2 \|E_\lambda f\|_{L^2(M)}^2
\le
\sum_{\lambda} \left|\psi\left(\frac{\lambda}{R}\right)\right|^2 \|E_\lambda f\|_{L^2(M)}^2
=
D_R(f)^2.
\]
Therefore
\[
\sum_{\lambda \in \Sigma_R} \left|\psi\left(\frac{\lambda}{R}\right)\right| \|E_\lambda f\|_{L^2(M)}
\le
\sqrt{M_R(\Sigma_R)}\, D_R(f).
\]

Combining this with the lower bound coming from the concentration hypothesis, we obtain
\[
(1 - \eta) \operatorname{Num}_R(f) \le \sqrt{M_R(\Sigma_R)}\, D_R(f).
\]
If $\operatorname{Num}_R(f) = 0$, then $P_R f \equiv 0$ and the claimed inequality is trivial. Otherwise, we divide by $D_R(f)$ and by $(1 - \eta)$ to obtain
\[
\FR_R(f) = \frac{\operatorname{Num}_R(f)}{D_R(f)} \le \frac{\sqrt{M_R(\Sigma_R)}}{1 - \eta},
\]
which is exactly the upper bound asserted in part (i).

To prove (ii), we combine this with the lower bound from Theorem \ref{theorem:localFRlowerbound}. From Theorem \ref{theorem:localFRlowerbound}, we already know that there exists a constant $c_0 > 0$ such that
\[
\FR_R(f) \ge \frac{c_0}{A_R R^{d/2}\sqrt{|E^{C_0/R}|}}.
\]
From (i), we have
\[
\FR_R(f) \le \frac{\sqrt{M_R(\Sigma_R)}}{1 - \eta}.
\]
Putting the two inequalities together gives
\[
\frac{c_0}{A_R R^{d/2}\sqrt{|E^{C_0/R}|}} \le \frac{\sqrt{M_R(\Sigma_R)}}{1 - \eta}.
\]
Rearranging yields,
\[
M_R(\Sigma_R) \, |E^{C_0/R}|
\ge \frac{c_0^2}{A_R^2 R^d} (1 - \eta)^2,
\]
which is exactly the lower bound stated in part (ii). This completes the proof.
\end{proof}

\section{Open problems and further directions}

The results presented in this paper suggest several natural questions for future investigation:

\begin{enumerate}
\item \textbf{Replace Minkowski control by Frostman-type measures/Hausdorff dimension.}  The proof of our   result  in Theorem \ref{main}  uses neighborhood volume growth. A natural extension is: assume 
$u$ satisfies a Frostman condition $|u|(B(x,r) \lesssim r^k$,
 and ask for analogous spectral $\ell^p$ synthesis thresholds. 
 \item \textbf{Optimality beyond sharp thresholds.} 
While we have identified sharp thresholds for spectral synthesis on tori and maximal rigidity on spheres, the optimal integrability exponent $p$ for general compact manifolds remains unclear. Under what geometric conditions on $(M,g)$ does a finite critical exponent exist? How does it relate to the growth of eigenfunctions or the geodesic structure?

\item \textbf{Regularity assumptions.} 
Remark~\ref{remark:regularity-sphere} notes that our proof for the sphere uses the smoothness of the measure $u$. Can Corollary~\ref{cor:sphere} be extended to $u \in L^1_{\mathrm{loc}}(S^d)$ or even to less regular distributions? The dependence on derivatives in the Kuznecov sum formula approach suggests this may require new techniques. The smoothness assumption enters through the use of the Kuznecov formula, whose proof
relies on stationary phase expansions. In particular, quantitative control requires bounds
on finitely many derivatives of the test function, and the resulting constants depend on
these derivatives.

\item \textbf{Refined stability estimates.} 
Theorem~\ref{thm:stability} provides quantitative decay for low-frequency reconstructions. Can these estimates be made uniform over families of manifolds, or can they be sharpened under additional curvature conditions? A more precise understanding of the constant $C(M,g,\psi,C_E,p)$ would be valuable for applications.

\item \textbf{Fourier ratio and approximation for non-band-limited functions.} 
Theorem~\ref{theorem:manifoldfourierratio} shows that small $\FR(f)$ implies good approximation by short spectral sums, and Theorem~\ref{thm:FR-converse-bandlimited} gives a partial converse under band-limitation. Is there a meaningful converse without the band-limiting hypothesis? Can one characterize the class of functions on $M$ with $\FR(f) \lesssim \sqrt{k}$ in terms of their spatial concentration?

\item \textbf{Uncertainty principles for specific manifolds.} 
Theorem~\ref{theorem:manifold-FR-sandwich} gives a general scale-dependent uncertainty principle. For specific manifolds (e.g., spheres, tori, hyperbolic surfaces), can the constants $c_0, C_0$ and the growth parameter $A_R$ be computed explicitly? Such computations would yield concrete, numerically testable uncertainty relations.

\item \textbf{Applications to signal recovery and compressed sensing.} 
As noted in Remark~\ref{rem:conceptual-picture}, the stability estimates suggest applications to inverse problems on manifolds. Can the synthesis principles developed here be used to design efficient sampling schemes or recovery algorithms for signals with thin support on manifolds?

\item \textbf{Extensions to non-compact or singular settings.} 
The compactness assumption is used crucially in the spectral theory and Weyl law. Can similar synthesis phenomena be established for certain non-compact manifolds (e.g., hyperbolic spaces) or for manifolds with boundary? What about singular spaces like graphs or fractal sets equipped with a Laplacian?
\end{enumerate}

We plan to investigate some of these questions in subsequent work.

\appendix

\section{Geometric and spectral preliminaries}\label{app:geometry}

\subsection{Basic facts regarding Riemannian manifolds}

Let $M$ be a smooth $d$-dimensional manifold that is compact and without boundary, and let $g$ be a Riemannian metric tensor on $M$. The metric tensor $g$ is locally represented as a real-symmetric, positive-definite $d \times d$ matrix with entries that are smooth functions of $x$,
\[
    g(x) = \begin{bmatrix}
        g_{11}(x) & g_{12}(x) & \cdots & g_{1d}(x) \\
        g_{21}(x) & g_{22}(x) & \cdots & g_{2d}(x) \\
        \vdots & \vdots & & \vdots \\
        g_{d1}(x) & g_{d2}(x) & \cdots & g_{dd}(x)
    \end{bmatrix}
\]
where, if $\partial_i = \frac{\partial}{\partial x_i}$, we have
\[
    g_{ij}(x) = \langle \partial_i, \partial_j \rangle_{g(x)}.
\]

The natural volume density on a Riemannian manifold $(M,g)$ is given locally by
\[
dV_g(x) = |g(x)|^{1/2}\,dx,
\]
where $|g(x)|$ denotes the determinant of $g(x)$.

\vskip.125in

There is a natural generalization of the Euclidean Laplacian to $(M,g)$, called the Laplace--Beltrami operator, which is expressed in local coordinates by
\[
    \Delta_g f = |g|^{-1/2} \sum_{i,j = 1}^d \partial_i \bigl(|g|^{1/2} g^{ij} \partial_j f\bigr).
\]
Here $g^{ij}$ are the matrix entries of $g^{-1}$. Equivalently, $\Delta_g$ is characterized by the identity
\[
    \int_M (\Delta_g f)\, h \, dV_g = -\int_M \langle \nabla_g f, \nabla_g h \rangle_g \, dV_g,
\]
where
\[
    \nabla_g f(x) = \sum_{i,j = 1}^d g^{ij}(x)\,\partial_i f(x)\,\partial_j
\]
denotes the gradient of $f$. It follows that $\Delta_g$ is self-adjoint and negative semidefinite.

\subsection{Laplace--Beltrami eigenfunctions and eigenvalues}

We now summarize some facts about the spectrum of $\Delta_g$. The material is standard and can be found, for example, in \cite{J02}. There exists an orthonormal basis of $L^2(M,dV_g)$ consisting of eigenfunctions $e_1,e_2,\ldots$ satisfying
\[
    \Delta_g e_j = -\lambda_j^2 e_j, \qquad j = 1,2,\ldots,
\]
where $\lambda_j \ge 0$. We index the eigenfunctions so that
\[
    0 = \lambda_1 \le \lambda_2 \le \lambda_3 \le \cdots \to \infty.
\]
Most of these facts follow by applying the spectral theory of compact self-adjoint operators to the resolvent $(I-\Delta_g)^{-1}$.

\begin{remark}
The choice to use $-\lambda_j^2$ as the eigenvalue for $e_j$ is a matter of convention. This allows us to think of $\lambda_j$ as the ``frequency'' of $e_j$, or rather the frequency of the standing wave solution
\[
    u(t,x) = \cos(t\lambda_j)\,e_j(x)
\]
to the wave equation $(\Delta_g - \partial_t^2)u = 0$.
\end{remark}

Given a function $f\in L^2(M,dV_g)$, the Fourier coefficients of $f$ are $\widehat f(j) = \langle f, e_j \rangle$, where
\[
    \langle f, e_j \rangle = \int_M f(x)\,\overline{e_j(x)} \, dV_g(x).
\]
If $f \in L^2(M,dV_g)$, then
\[
    f = \sum_{j = 1}^\infty \widehat f(j)\, e_j
\]
with convergence in $L^2(M,dV_g)$; if $f$ is sufficiently smooth, then the sum converges uniformly, and equality holds pointwise.

\vskip.125in

A fundamental problem in harmonic analysis on manifolds is to estimate the number of eigenvalues (with multiplicity) below a threshold, i.e.\ to obtain asymptotics for the Weyl counting function
\[
    N(\lambda) = \#\{j \in \mathbb N : \lambda_j \le \lambda \}.
\]
The Weyl law gives
\begin{equation}\label{eq:Weyl-law}
    N(\lambda) = (2\pi)^{-d}\,|M|\,|B|\,\lambda^d + O(\lambda^{d-1}),
\end{equation}
where $|M|$ denotes the Riemannian volume of $M$ and $|B|$ denotes the volume of the unit ball in $\mathbb R^d$. The remainder is sharp in the sense that it cannot be improved for the sphere $S^d$. Many manifolds enjoy an improved Weyl remainder estimate \cite{DG75}.

\end{document}